\newcommand{\labbel}{\label}
\newtheorem{theorem}{Theorem}[section]
\newtheorem{lemma}[theorem]{Lemma}
\newtheorem{proposition}[theorem]{Proposition} 
\newtheorem{corollary}[theorem]{Corollary} 
\newtheorem*{claim}{Claim}
\newtheorem*{theorem*}{Theorem}
\newtheorem*{corollary*}{Corollary}
\theoremstyle{definition}
\newtheorem{definition}[theorem]{Definition}
\theoremstyle{remark}
\newtheorem{remark}[theorem]{Remark}
\newtheorem*{acknowledgement}{Acknowledgement}
\newcommand{\+}{\mathbin{\hash}}
\DeclareMathOperator*{\nsum}
    {\ifdim\displaywidth>0pt {{\mbox{\raisebox{-0.4ex}{\LARGE$\hash$}}}}
     \else{{\mbox{\raisebox{-0.3ex}{\Large$\hash$}}}}
\fi}
\newcommand{\x}{\mathbin{\otimes}}
\newcommand{\xx}{\bigotimes} 
\newcommand{\prodh}{\prod^{\#}}
\newcommand{\LL}[1]
{{\text{L}^*_{#1}}}
\newcommand{\LLL}[1]
{{\text{L}^0_{#1}}}
\newcommand{\diff}{dif\hspace{-2.5pt} f}
\begin{document}
 
\title
{An infinite natural product}

\author{Paolo Lipparini} 
\address{ 
Dipartimento Produttivo di Matematica\\Viale della  Ricerca
 Scientifica\\Universit\`a di Roma ``Tor Vergata'' 
\\I-00133 ROME ITALY}
\urladdr{http://www.mat.uniroma2.it/\textasciitilde lipparin}

\keywords{Ordinal number; (infinite) (natural) product, sum; 
(locally) finitely Carruth extension}

\subjclass[2010]{03E10, 06A05}
\thanks{Work performed under the auspices of G.N.S.A.G.A. Work 
partially supported by PRIN 2012 ``Logica, Modelli e Insiemi''}

\begin{abstract}
We study an infinite countable iteration of the natural product
between ordinals. We present an ``effective''   way to compute 
this countable natural product; in the non trivial cases
the result depends only on the natural sum of the degrees of the 
factors, where the degree of a nonzero ordinal is the largest exponent in its
Cantor normal form representation. Thus we are able to lift
former results about infinitary sums to infinitary products.
Finally, we provide an order-theoretical characterization
of the infinite natural product; this characterization merges
in a nontrivial way 
a theorem by Carruth describing the natural product of two ordinals and
a known description of the ordinal product of a possibly
infinite sequence of ordinals. 
\end{abstract} 

\maketitle

\section{Introduction} \labbel{intro} 

The  usual addition $+$ and multiplication 
$\cdot$ between 
ordinals can be defined by transfinite recursion and have
a clear order-theoretical meaning; for example, $\alpha+ \beta $
is the order-type of a copy of $\alpha$ to which a copy 
of $\beta$ is added at the top.
However,  $+$ and  $\cdot$
have very poor algebraic properties; though both are associative, 
 they are neither commutative nor cancellative,
left distributivity fails, etc.
See, e.~g., Bachmann  \cite{Bac},
Hausdorff \cite{Ha}  
and Sierpi{\'n}ski  \cite{Sier} for full details.

In certain cases it is useful to consider the so-called \emph{natural}
operations $\+$ and $\x$.   
These operations are defined by expressing 
the operands in
Cantor normal form and, roughly, treating
the expressions as if they were polynomials in $ \omega$.
The natural operations have the advantage of satisfying
good algebraic properties; moreover,  in a few cases they have found 
mathematical applications even outside logic.
See e.~g., Carruth \cite{Car} and  
Toulmin \cite{T}.  
Further references, including some variants and  historical remarks,
can be found in Altman \cite{A} and Ehrlich \cite[pp. 24--25]{E}. 
It is also interesting to observe that the natural 
operations are
the restriction to the ordinals
of the surreal  operations on  Conway ``Numbers'' \cite{Co}.
Limited to the case of the ordinal natural sum, the corresponding 
two-arguments recursive definition appears implicitly as early as 
in the proof of  de Jongh and  Parikh \cite[Theorem 3.4]{dJP}. 

An infinitary generalization of 
the natural sum in which the supremum is taken at the  limit stage
 has been considered in
Wang \cite{W}  and 
 V\"a\"an\"anen and  Wang \cite{VW}
with applications
to infinitary logics. This  infinite natural sum has been studied in detail in
\cite{w}, where an order-theoretical characterization has been provided.
In the present note we introduce and study the analogously
 defined infinitary product.
The computation of 
the infinite natural product 
can be  reduced  to the computation
of some---possibly infinite---natural sum;
in particular, we can directly  transfer results
from sums to products, rather than repeating
essentially the same arguments.
 Curiously, the method  applies 
to the usual infinitary operations, too. See 
Theorem \ref{ordinary}
and Corollary \ref{corsier}; we are not aware of previous
uses of 
this technique. 

Order-theoretical characterizations of the finitary natural operations
have been provided in  Carruth \cite{Car}.
Some characterizations seem to have been independently rediscovered many 
times, e.~g., Toulmin \cite{T} and
  de Jongh and  Parikh \cite{dJP}.
As we showed in \cite{w},
 Carruth  characterization of the finite natural sum cannot be generalized as it stands
to the infinitary natural sum; see, in particular,
the comments at the beginning of \cite[Section 4]{w}. 
A similar situation occurs for the infinitary natural product; 
see Subsection \ref{4s} below. 
In the case of the infinite natural sum
 the difficulty can be circumvented by imposing 
a finiteness condition to a Carruth-like description: see 
\cite[Theorem 4.7]{w}.
As we shall show in Section \ref{otc}, a  similar result holds 
for the infinite natural product, though
the situation gets
 technically more involved.

To explain our construction in some detail,
it is well known that the usual finite ordinal product
is order-theoretically characterized by taking the reverse lexicographical order
on the product of the factors.
Less known,  the same  holds in the
infinite case, too, provided 
that in the product one takes into account only elements with 
finite support. See Hausdorff \cite[\S\ 16]{Ha} and  Matsuzaka \cite{M}.
We show that the infinite natural product can be characterized by merging the 
representation by  Carruth  and the  just mentioned one; roughly,
by using Carruth order for a sufficiently large but finite product  of factors
and  then working as in an ordinary product
 when we approach  infinity.
Quite surprisingly, a local version of the result holds.
Namely, if $\leq'$ is a linear order on the finite-support-product
of a sequence $( \alpha_i) _{i < \omega} $ of ordinals 
and $\leq'$ 
is such that,
for every element $c$, the set of the $\leq'$ predecessors of $c$
is built in a way similar to above, then 
$\leq'$ is a well-order of order type
less than or equal to the infinite natural product of the 
$\alpha_i$'s.  Moreover, the infinite natural product 
is actually the  maximum of the ordinals obtained this way, that is,
the order-type of the product can  always be realized in the above way.
See Section \ref{otc}
and in particular Theorem \ref{otcth} for full details.

\subsection{Preliminaries} \labbel{prel} We assume familiarity with the 
basic theory of ordinal numbers.
Unexplained notions and notations are standard and
can be found, e.~g., in the mentioned 
books \cite{Bac,Ha, Sier}.
Notice  that here
sums, products and exponentiations  are always considered in
 the ordinal sense.
The usual ordinal product of two ordinals $\alpha$ and $\beta$ is denoted 
by $ \alpha \beta $
or sometimes $\alpha \cdot \beta $ for clarity.
The classical infinitary ordinal product is denoted by
  $\prod _{i < \omega }  \alpha _i $. 
Recall that every nonzero ordinal $ \alpha $ can be 
expressed in \emph{Cantor normal form} in
a unique way  as follows 
\begin{equation*} \labbel{cantor}   
 \alpha  =
 \omega ^ {\xi_k} r_k + \omega ^ {\xi _{k-1}}r _{k-1}    +
\dots
+ \omega ^ {\xi_1} r_1 + \omega ^ {\xi_0}r_0  
 \end{equation*}
  for some  integers 
$k \geq 0$, $r_k, \dots, r_0 >0$ and ordinals
$ \xi _k > \xi _{k-1} > \dots > \xi_1 > \xi_0  $.
If $\beta$ is another ordinal expressed in Cantor normal form,
the \emph{natural sum} $ \alpha \+ \beta $ is obtained by summing
the two expressions as if they were polynomials in $ \omega$.
The \emph{natural product} $\alpha \x \beta $    
is computed by using the rule
$ \omega^ \xi \x \omega ^ \eta = \omega ^{\xi \+ \eta} $ and
then expanding again by  ``linearity''.  
Both $\+$ and $\x$ are commutative, associative and cancellative
(except for multiplication by $0$, of course). 
If $( \alpha_i) _{i < \omega} $ is a sequence of ordinals,
$\nsum _{i < \omega} \alpha_{i} = \sup _{n< \omega } ( \alpha _0 \+ \alpha _1 
\+ \dots \+ \alpha _{n-1} )$.
See \cite{W,VW,w} for further details about $\nsum$.

\section{An infinite natural product} \labbel{ainp} 

\begin{definition} \labbel{dfnp}
Suppose that  $( \alpha_i) _{i < \omega} $  is a sequence of ordinals.

For $i < \omega $, let $P_i$
denote the partial natural product
$ \alpha _0 \x \alpha _1 \x \dots \x \alpha _{i-1} $,
with the convention 
$P_0=1$.  

 Define
$ \xx  _{i < \omega} \alpha_i  = \lim _{i < \omega } P_i $.

This means that  
$ \xx  _{i < \omega} \alpha_i = 0$ 
if at least one $\alpha_i$ is $0$ and  
$ \xx  _{i < \omega} \alpha_i  = \sup _{i < \omega } P_i $
if each $\alpha_i$ is different from $0$
(cf. Clause (5) in Proposition \ref{factswp} below). 
\end{definition}   

From now on, when not otherwise specified, 
$( \alpha_i) _{i < \omega} $ is a fixed sequence of ordinals 
and the partial natural
products $P_i$ are always computed as above and   with respect to
the sequence 
$( \alpha_i) _{i < \omega} $.

We now state some simple facts 
about the infinitary operation $\xx$.
Only (2), (5) and (6) will be used in what follows. 

\begin{proposition} \labbel{factswp}
Let $\alpha_i$, $\beta_i$ be two sequences of 
ordinals and $ n,m < \omega$.  
  \begin{enumerate}    
\item
$\prod_{i < \omega  } \alpha _i
\leq
\xx _{i < \omega   } \alpha _i$
\item
If $\beta_i \leq \alpha _i$, for every $ i < \omega$,
then
$\xx _{i < \omega   } \beta  _i \leq \xx _{i < \omega   } \alpha _i$    
\item
If  $\pi$ is a permutation of $ \omega$, then
$\xx _{i < \omega } \alpha _i = \xx _{i < \omega } \alpha _{\pi(i)}$
\item
More generally, suppose that $(F_h) _{h < \omega } $ is a partition
of $ \omega$ into finite subsets,
say, $F_h = \{ j_1, \dots, j _{r(h)} \} $, for every 
$ h \in \omega$.  Then
\begin{equation*}
\xx _{i < \omega   } \alpha _i =
\xx _{h < \omega   } \,\xx _{j \in F_h}  \alpha _j=
\xx _{h < \omega   } ( \alpha _{j_1} \x \alpha _{j_2} \x \dots \x \alpha _{j _{r(h)}}  )
  \end{equation*} 
 \end{enumerate} 
 Suppose in addition that $\alpha_i \neq 0$, for every $i < \omega$.
\begin{enumerate}  
\item[(5)]
If $i < j$, then
$P_i \leq P_j$; equality holds if and only if 
$\alpha_i= \dots= \alpha _{j-1} =1$.   
\item[(6)]
For every $n< \omega$, we have 
$P_n  \leq \xx _{i < \omega } \alpha _i$; equality holds
if and only if $\alpha_i=1$, for every $i \geq n$.  
   \end{enumerate}
 \end{proposition}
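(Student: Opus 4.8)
The plan is to derive everything from the finitary algebra of $\x$ recalled in the Preliminaries --- associativity, commutativity, cancellativity (away from $0$) and monotonicity in each argument --- together with the elementary finitary inequality $\alpha\cdot\beta\le\alpha\x\beta$; no genuinely new computation with Cantor normal forms is needed. First I would clear away the degenerate cases: if some $\alpha_i=0$ (or, for (2), some $\beta_i=0$), then by Definition~\ref{dfnp} the relevant infinite natural product is $0$ and (1)--(4) hold trivially, so for (1)--(4) I may assume all factors nonzero, and in (5)--(6) this is part of the hypothesis. In the nonzero case every factor is $\ge 1$, so both the ordinary partial products $\alpha_0\cdots\alpha_{n-1}$ and the natural partial products $P_n$ are nondecreasing in $n$; hence the limits in Definition~\ref{dfnp} and in the definition of $\prod_{i<\omega}\alpha_i$ are in fact suprema, a remark I will use repeatedly.

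I would next do (5) and (6), since they underlie the rest. For (5), associativity gives $P_j=P_i\x R$ with $R=\alpha_i\x\cdots\x\alpha_{j-1}\ge 1$, so $P_i=P_i\x 1\le P_i\x R=P_j$. For the equality clause I first record that a finite natural product of ordinals $\ge 1$ equals $1$ if and only if every factor equals $1$ (immediate, since $\gamma_1\le\gamma_1\x(\gamma_2\x\cdots\x\gamma_r)$ when all factors are $\ge 1$); then $P_i=P_j$ forces $R=1$ by cancellativity (using $P_i\neq 0$), hence $\alpha_i=\dots=\alpha_{j-1}=1$, and the converse is clear. Part (6) then follows: $\xx_{i<\omega}\alpha_i=\sup_m P_m\ge P_n$ by (5), and since $(P_m)_m$ is nondecreasing this supremum equals $P_n$ precisely when $P_m=P_n$ for every $m\ge n$, which by (5) happens precisely when $\alpha_i=1$ for every $i\ge n$.

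For (1) and (2) I would argue by induction on $n$: $\alpha_0\cdots\alpha_{n-1}\le P_n$ follows from $\alpha\cdot\beta\le\alpha\x\beta$ together with monotonicity of $\x$, while $Q_n\le P_n$, where $Q_n$ is the $n$-th natural partial product of $(\beta_i)$, follows from monotonicity of $\x$ in both arguments; taking suprema gives both inequalities, noting in (2) that if all $\beta_i\neq 0$ then all $\alpha_i\ge\beta_i\ge 1$ so the suprema are legitimate. Finally (4) --- which also yields (3), on taking $F_h=\{\pi(h)\}$, since the singletons $\{\pi(h)\}$ form a partition of $\omega$ into finite sets --- is the only item carrying a real argument. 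Writing $\delta_h=\alpha_{j_1}\x\cdots\x\alpha_{j_{r(h)}}$ for the $h$-th grouped factor and letting $P'_n$ be the $n$-th natural partial product of $(\delta_h)_h$, commutativity and associativity give $P'_n=\xx_{i\in F_0\cup\dots\cup F_{n-1}}\alpha_i$ and $P_n=\xx_{i<n}\alpha_i$; and the finitary content of (5) (all $\alpha_i\ge 1$) gives $\xx_{i\in S}\alpha_i\le\xx_{i\in T}\alpha_i$ whenever $S\subseteq T$ are finite. Since each $F_h$ is finite, every initial segment $\{0,\dots,n-1\}$ sits inside $F_0\cup\dots\cup F_{m-1}$ for $m$ large enough, and conversely every $F_0\cup\dots\cup F_{n-1}$ sits inside $\{0,\dots,m-1\}$ for $m$ large enough; hence $(P_n)_n$ and $(P'_n)_n$ are mutually cofinal under $\le$ and therefore have the same supremum, i.e. $\xx_{i<\omega}\alpha_i=\xx_{h<\omega}\delta_h$. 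The hard part is really just the bookkeeping of this mutual-cofinality argument --- and observing that finiteness of the $F_h$ is exactly what lets the two families of index sets interleave; once the finitary laws of $\x$ are granted, nothing deeper is involved.
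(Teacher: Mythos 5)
Your proof is correct and follows essentially the same route as the paper: the paper's proof is a one-line appeal to the finitary properties of $\x$, with the only elaborated point being exactly your mutual-cofinality argument for (3)/(4) (partial products on each side bounded by sufficiently long partial products on the other, using associativity, commutativity, monotonicity and the nonvanishing of the factors). Your extra details --- cancellativity for the equality clauses in (5)--(6), $\alpha\cdot\beta\le\alpha\x\beta$ for (1), and deducing (3) from (4) via singleton blocks --- are just fillings-in of what the paper leaves as ``easy''.
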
 

\begin{proof}
Easy, using the properties 
of the finitary $\x$. We just comment on (3).
This is trivial if at least one $\alpha_i$ is $0$. 
Otherwise, every partial product on the left is bounded by some
sufficiently long partial product on the right (by monotonicity,
associativity and commutativity of $\x$, 
and since all factors are nonzero by assumption); the converse holds as well,
hence the infinitary products are equal. 
 
Let us mention that (3) and (4) can be also  proved by using
Theorem \ref{descr} below and the corresponding properties of $\nsum$
given in \cite[Proposition 2.4(5)(6)]{w}.   
\end{proof}

\begin{lemma} \labbel{lem}
If $( \beta _i) _{i < \omega} $  is a sequence of ordinals
and $\beta= \nsum _{i < \omega } \beta _i$, then 
\begin{equation*}\labbel{elem}
\xx _{i < \omega } \omega ^{ \beta _i} = \omega ^{\beta }   
  \end{equation*}    
 \end{lemma}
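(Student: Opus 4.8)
The plan is to reduce the identity to the monotonicity and continuity of ordinal base-$\omega$ exponentiation. Since each factor $\omega^{\beta_i}$ is nonzero, Definition \ref{dfnp} gives $\xx_{i<\omega}\omega^{\beta_i} = \sup_{n<\omega} Q_n$, where $Q_n = \omega^{\beta_0}\x\omega^{\beta_1}\x\dots\x\omega^{\beta_{n-1}}$ denotes the $n$-th partial natural product of the sequence $(\omega^{\beta_i})_{i<\omega}$. Writing $\beta^{(n)} = \beta_0 \+ \beta_1 \+ \dots \+ \beta_{n-1}$ for the $n$-th partial natural sum of the $\beta_i$'s, an easy induction on $n$ --- using the defining rule $\omega^\xi \x \omega^\eta = \omega^{\xi\+\eta}$ together with the associativity of $\x$ and of $\+$ --- shows $Q_n = \omega^{\beta^{(n)}}$. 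Hence $\xx_{i<\omega}\omega^{\beta_i} = \sup_{n<\omega}\omega^{\beta^{(n)}}$, while $\beta = \sup_{n<\omega}\beta^{(n)}$ by the definition of the infinite natural sum.

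It then remains to check that $\sup_{n<\omega}\omega^{\beta^{(n)}} = \omega^{\sup_{n<\omega}\beta^{(n)}}$. Since the sequence $(\beta^{(n)})_{n<\omega}$ is nondecreasing and $\gamma \mapsto \omega^\gamma$ is strictly increasing, the inequality $\leq$ follows from $\beta^{(n)}\leq\beta$. For the reverse inequality I would distinguish whether the supremum $\beta$ is attained by some $\beta^{(n)}$: if it is, we are done immediately; otherwise $\beta$ is a limit ordinal, and the continuity of ordinal exponentiation gives $\omega^\beta = \sup_{\gamma<\beta}\omega^\gamma$, while each $\gamma<\beta$ satisfies $\gamma<\beta^{(n)}$ for some $n$, so $\omega^\gamma<\omega^{\beta^{(n)}}\leq\sup_{m<\omega}\omega^{\beta^{(m)}}$.

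I do not expect a genuine obstacle. The only point needing a little care is the final case distinction, namely invoking the standard fact $\omega^\lambda = \sup_{\gamma<\lambda}\omega^\gamma$ only when $\beta$ really is a limit, and observing that when $\beta$ is $0$ or a successor the nondecreasing sequence $(\beta^{(n)})_{n<\omega}$ must stabilize at $\beta$. Alternatively, the last two steps can be merged by citing the general principle that a strictly increasing, continuous function on the ordinals commutes with suprema of nonempty sets, applied to $\gamma\mapsto\omega^\gamma$.
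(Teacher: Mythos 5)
Your proposal is correct and follows essentially the same route as the paper: identify each partial natural product with $\omega$ raised to the corresponding partial natural sum, and then pass to the supremum using the continuity of base-$\omega$ exponentiation (which the paper simply invokes, while you spell out the limit/attained-supremum case distinction explicitly).
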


 \begin{proof} 
Since $\omega ^{ \beta _i}$
is always different from $0$,
we  have  
 $ \xx  _{i < \omega} \alpha_i  = \sup _{i < \omega } P_i $.
Here, of course,  $P_i$ is computed with respect to the sequence
given by $\alpha_i = \omega ^{ \beta _i}$, for $ i< \omega$. 

By a property of the natural product
(or the definition, if you like), we have
$P_i= 
\omega ^{ \beta _0} \x  \omega ^{ \beta _1} \x \dots \x  \omega ^{ \beta _{i-1}} =
 \omega ^{ \beta _0 \+ \beta _1 \+ \dots \+ \beta _{i-1} }$,
for every $ i < \omega$.  
Letting $B_i = \beta _0 \+ \beta _1 \+ \dots \+ \beta _{i-1} $,
we have by definition 
 $\beta= \nsum _{i < \omega } \beta _i = \sup  _{i < \omega } B _i $.
But then 
 $ \xx  _{i < \omega} \alpha_i  = \sup _{i < \omega } P_i 
= \sup _{i < \omega }  \omega ^{B_i} =
\omega ^{ \beta }$
by continuity of the exponentiation.
\end{proof}

Let $\alpha$ be a nonzero ordinal expressed as
$ \omega ^ {\xi_k} r_k +
\dots
 + \omega ^ {\xi_0}r_0  $
 in Cantor normal form. The ordinal $d( \alpha ) =\xi_ k$ 
will be called  the \emph{degree}
or the \emph{largest exponent} of $\alpha$.
The ordinal $ m ( \alpha ) = \omega ^ {\xi_k} r_k$
will be called the \emph{leading monomial}
of $\alpha$.  By convention, we set $m(0) = 0$.
The following lemma is trivial, but it  will be useful
in many situations.

\begin{lemma} \labbel{lema}
For every ordinal $\alpha$,
\begin{equation*}\labbel{brup}   
m(\alpha) \leq \alpha 
\leq m(\alpha) + m(\alpha) =  m( \alpha ) \cdot 2
\leq m(\alpha) \x 2
 \end{equation*}  
 \end{lemma}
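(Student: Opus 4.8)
The statement to prove is Lemma~\ref{lema}: for every ordinal $\alpha$,
\[
m(\alpha)\le\alpha\le m(\alpha)+m(\alpha)=m(\alpha)\cdot 2\le m(\alpha)\x 2.
\]

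The plan is to dispose of the trivial case $\alpha=0$ first (everything is $0$), and then work with the Cantor normal form $\alpha=\omega^{\xi_k}r_k+\dots+\omega^{\xi_0}r_0$, so that $m(\alpha)=\omega^{\xi_k}r_k$. The first inequality $m(\alpha)\le\alpha$ is immediate since $\alpha$ is obtained from $m(\alpha)$ by adding the nonnegative ordinal $\omega^{\xi_{k-1}}r_{k-1}+\dots+\omega^{\xi_0}r_0$ on the right. For the middle inequality $\alpha\le m(\alpha)+m(\alpha)$: the tail $\tau=\omega^{\xi_{k-1}}r_{k-1}+\dots+\omega^{\xi_0}r_0$ has degree $\xi_{k-1}<\xi_k$, hence $\tau<\omega^{\xi_k}\le\omega^{\xi_k}r_k=m(\alpha)$; therefore $\alpha=m(\alpha)+\tau\le m(\alpha)+m(\alpha)$, using left monotonicity of ordinal addition. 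The equality $m(\alpha)+m(\alpha)=m(\alpha)\cdot 2$ is just the definition of multiplication by $2$.

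The last inequality $m(\alpha)\cdot 2\le m(\alpha)\x 2$ is the only one with any content, and even it is routine: it follows from the general fact that the ordinary product is dominated by the natural product, i.e.\ $\gamma\cdot\delta\le\gamma\x\delta$ for all ordinals $\gamma,\delta$. If one prefers a direct computation, write $m(\alpha)=\omega^{\xi_k}r_k$; then $m(\alpha)\cdot 2=\omega^{\xi_k}(r_k\cdot 2)$, while $m(\alpha)\x 2=\omega^{\xi_k}r_k\x 2$; since multiplying a single monomial $\omega^{\xi_k}r_k$ by the finite ordinal $2$ gives the same result whether one uses $\cdot$ or $\x$ (both yield $\omega^{\xi_k}(2r_k)$), the two sides are in fact equal, so the inequality holds a fortiori. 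I would state it as an inequality (rather than equality) both to match the symmetric-looking display and because that is all that is needed downstream.

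The main obstacle, such as it is, is purely bookkeeping: being careful that degrees are compared correctly (so that the tail is genuinely below $\omega^{\xi_k}$, which needs $\xi_{k-1}<\xi_k$, true by the strict decrease in Cantor normal form) and that one invokes \emph{left} monotonicity of ordinal addition ($\beta\le\gamma\implies \delta+\beta\le\delta+\gamma$), since right monotonicity can fail. No deep idea is required; the lemma is flagged as trivial in the text, and the proof is a two-line verification once the Cantor normal form is written out.
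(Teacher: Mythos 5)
Your proof is correct, and it is essentially the intended argument: the paper offers no proof at all (the lemma is simply declared trivial), and the Cantor-normal-form verification you give — $\alpha=m(\alpha)+\tau$ with $\tau<\omega^{\xi_k}\le m(\alpha)$, plus the observation that $m(\alpha)\cdot 2=m(\alpha)\x 2$ for a single monomial — is exactly the routine check the author is implicitly relying on. The only cosmetic point is to note that when $k=0$ the tail $\tau$ is empty, so the middle inequality holds trivially; your argument covers this at once since $\tau=0\le m(\alpha)$.
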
 

\begin{lemma} \labbel{lem2}
If  $( \alpha  _i) _{i < \omega} $  is a sequence of ordinals
which is not eventually $1$, then 
\begin{equation*}\labbel{prik} 
 \xx  _{i < \omega} \alpha_i =   \xx  _{i < \omega} m(\alpha_i)  
 \end{equation*}    
 \end{lemma}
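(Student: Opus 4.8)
The plan is to sandwich each $\alpha_i$ between $m(\alpha_i)$ and $m(\alpha_i)\x 2$ via Lemma \ref{lema}, and then to show that multiplying every factor of an infinite natural product by $2$ changes nothing — provided the sequence of factors is not eventually $1$. First I would dispose of the degenerate case: if some $\alpha_i=0$ then $m(\alpha_i)=0$ as well, so both sides of the asserted identity are $0$ by Definition \ref{dfnp}. Hence I assume $\alpha_i\neq 0$ for every $i$, so that $\xx_{i<\omega}\alpha_i=\sup_n P_n$, and similarly the infinite natural product of the $m(\alpha_i)$ is the supremum of their partial natural products.

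By Lemma \ref{lema} we have $m(\alpha_i)\le\alpha_i\le m(\alpha_i)\x 2$ for every $i$, so monotonicity (Proposition \ref{factswp}(2)) gives
\[
\xx_{i<\omega} m(\alpha_i)\ \le\ \xx_{i<\omega}\alpha_i\ \le\ \xx_{i<\omega}\bigl(m(\alpha_i)\x 2\bigr).
\]
Thus it suffices to prove $\xx_{i<\omega}\bigl(m(\alpha_i)\x 2\bigr)\le\xx_{i<\omega} m(\alpha_i)$, for then all three ordinals coincide. I put $\mu_i=m(\alpha_i)$ and $Q_n=\mu_0\x\mu_1\x\dots\x\mu_{n-1}$; every $\mu_i$ is nonzero, and — this is the point where the hypothesis enters — infinitely many $\mu_i$ are $\ge 2$, because $m(\alpha_i)=1$ precisely when $\alpha_i=1$ (and $m(\alpha_i)=0$ precisely when $\alpha_i=0$), so $\alpha_i\ge 2$ forces $\mu_i\ge 2$.

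Now fix $n<\omega$. By commutativity and associativity of the finitary natural product,
\[
(\mu_0\x 2)\x(\mu_1\x 2)\x\dots\x(\mu_{n-1}\x 2)=Q_n\x 2^n .
\]
Since infinitely many $\mu_i$ are $\ge 2$, I choose $m\ge n$ for which at least $n$ of $\mu_n,\mu_{n+1},\dots,\mu_{m-1}$ are $\ge 2$; by monotonicity of $\x$ this yields $\mu_n\x\dots\x\mu_{m-1}\ge 2^n$, and hence $\mu_0\x\dots\x\mu_{m-1}\ge Q_n\x 2^n$. Since the left-hand side is a partial natural product of the sequence $(\mu_i)_{i<\omega}$, it is $\le\xx_{i<\omega}\mu_i$, so $Q_n\x 2^n\le\xx_{i<\omega}\mu_i$; taking the supremum over $n$ gives $\xx_{i<\omega}(\mu_i\x 2)=\sup_n\bigl(Q_n\x 2^n\bigr)\le\xx_{i<\omega}\mu_i$, which is what we wanted, and the lemma follows.

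The only genuine subtlety is the one flagged above: the ``not eventually $1$'' assumption is exactly what forces the partial products $Q_n$ to grow quickly enough to swallow the extra finite factors $2^n$ coming from Lemma \ref{lema}; for a sequence like $2,1,1,1,\dots$ the identity is false. Everything else is routine manipulation with the elementary properties of $\x$ recorded in Proposition \ref{factswp} and Lemma \ref{lema}.
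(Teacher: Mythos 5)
Your proof is correct, but it takes the nontrivial inequality by a different route than the paper. The paper bounds each partial product $\xx_{i<h}\alpha_i$ from above by $m\bigl(\xx_{i<h}\alpha_i\bigr)\x 2=\bigl(\xx_{i<h}m(\alpha_i)\bigr)\x 2$, i.e.\ it applies Lemma \ref{lema} to the \emph{partial product} and uses the identity $m(\xx_{i<h}\alpha_i)=\xx_{i<h}m(\alpha_i)$ (multiplicativity of the leading monomial under the finitary $\x$), after which a \emph{single} later factor $\alpha_{k-1}\geq 2$ absorbs the extra $\x\,2$. You instead apply Lemma \ref{lema} termwise, obtaining $\xx_{i<\omega}\alpha_i\leq\xx_{i<\omega}(m(\alpha_i)\x 2)$, and then absorb the accumulated $2^n$ at stage $n$ using $n$ later factors $\geq 2$; this avoids the leading-monomial identity entirely, at the price of needing infinitely many factors $\geq 2$ (which ``not eventually $1$'' does supply) rather than one per stage. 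One thing the paper's version buys that yours does not: it transfers almost verbatim to the ordinary infinite product in Theorem \ref{ordinary}(2), since it never commutes the extra factor $2$ past other factors, whereas your collection of the $2$'s into $2^n$ uses commutativity of $\x$ and would not adapt to $\prod$.

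One small inaccuracy in your closing aside: for the sequence $2,1,1,1,\dots$ the identity of Lemma \ref{lem2} is actually \emph{true} (both sides equal $2$); what fails there is only your auxiliary inequality $\xx_{i<\omega}(m(\alpha_i)\x 2)\leq\xx_{i<\omega}m(\alpha_i)$. A genuine counterexample to the lemma without the hypothesis is $\omega+1,1,1,\dots$, where the left side is $\omega+1$ and the right side is $\omega$.
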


 \begin{proof}
This is trivial if at least 
one $\alpha_i$ is $0$. 

Hence suppose that 
$\alpha_i \neq 0$, for all $i< \omega$, 
 and that $( \alpha_i) _{i < \omega} $  is not eventually $1$.
The inequality 
$ \xx  _{i < \omega} \alpha_i \geq
  \xx  _{i < \omega} m(\alpha_i) $ 
is trivial by monotonicity (Proposition \ref{factswp}(2)), since  
$\alpha \geq m(\alpha) $,
for every ordinal $\alpha$. 

To prove the converse,
we show that, for every $h < \omega$, 
there is $k < \omega$ 
such that 
$ \xx  _{i < h} \alpha_i \leq
  \xx  _{i < k} m(\alpha_i) $.
This is enough since if all the $\alpha_i$'s are nonzero,
then the succession of the partial products is nondecreasing
(Proposition \ref{factswp}(5)).
It is a trivial property of the finitary natural 
product that 
$m( \xx  _{i < h} \alpha_i) =
  \xx  _{i < h} m(\alpha_i) $
(by strict monotonicity of $\+$).
Since $( \alpha_i) _{i < \omega} $ 
is not eventually $1$,
there is  $k > h$ such that $\alpha_{k-1} \geq 2$.  
Then 
$  \xx  _{i < k} m(\alpha_i) \geq 
\left( \xx  _{i < {k-1}} m(\alpha_i) \right) \x 2 \geq
\left( \xx  _{i < h} m(\alpha_i) \right) \x 2 =
m \left( \xx  _{i < h} \alpha_i \right) \x 2  \geq
 \xx  _{i < h} \alpha_i
$, by Lemma \ref{lema}. 
 \end{proof}

\begin{theorem} \labbel{descr}
Let  $( \alpha  _i) _{i < \omega} $  be a sequence of ordinals
and let $\beta= \nsum _{i < \omega } d( \alpha _i) $. 
The infinite natural product $ \xx  _{i < \omega} \alpha_i  $
can be computed according to the following rules. 
\begin{align}\labbel{a1}  
 \xx  _{i < \omega} \alpha_i & = 0 \quad 
\text{ if (and only if) at least one $\alpha_i$ is equal to } 0; \\ 
\labbel{a2} 
\xx  _{i < \omega} \alpha_i & = \alpha _0 \x \dots \x \alpha_{n-1} 
\quad \quad
\text{ if $ \alpha _i = 1$, for every $i \geq n$};   \\    
\labbel{a3} 
\begin{split}    
 \xx  _{i < \omega} \alpha_i  &=
 \omega ^{d( \alpha _0) \+ \dots \+ d(\alpha _{n-1}) + 1} =
  \omega^ {\beta+ 1} 
\quad \text{ if $\alpha_i \neq 0$, for all $i< \omega $,} \\
\alpha_i & < \omega  \text{, for all $i \geq  n$,
and the sequence is not
 eventually $1$;}  
\end{split}   
\\
\labbel{a4}   
 \xx  _{i < \omega} \alpha_i &=  
 \xx  _{i < \omega}  \omega ^{d(\alpha_i)}=
 \omega^ {\beta}  
\quad \text{ if none of the above cases applies,} 
\end{align} 
\quad \quad \quad \quad that is, no element of the sequence is $0$ 
and the members
of 

\quad \quad \quad  the  sequence are not
eventually $< \omega $.
 \end{theorem}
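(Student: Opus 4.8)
The plan is to dispose of the four cases in order of increasing difficulty, reducing everything to the already-proved Lemmas \ref{lem}, \ref{lema} and \ref{lem2}. Case \eqref{a1} is immediate from Definition \ref{dfnp} (the limit of a sequence that hits $0$). Case \eqref{a2} is exactly Proposition \ref{factswp}(6): if $\alpha_i=1$ for all $i\geq n$ then all partial products $P_m$ with $m\geq n$ equal $P_n=\alpha_0\x\dots\x\alpha_{n-1}$, so the supremum is $P_n$. In both these cases the hypotheses of the other clauses visibly fail, so there is no ambiguity about which rule applies. It remains to treat the ``generic'' situation where every $\alpha_i\neq 0$ and the sequence is not eventually $1$; here Lemma \ref{lem2} already gives $\xx_{i<\omega}\alpha_i=\xx_{i<\omega}m(\alpha_i)$, and I would push this further.

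The first move in the generic case is to observe that writing $m(\alpha_i)=\omega^{d(\alpha_i)}r_i$ with $r_i\geq 1$, the same squeezing argument as in Lemma \ref{lem2} (bounding $m(\alpha_i)$ between $\omega^{d(\alpha_i)}$ and $\omega^{d(\alpha_i)}\x 2$, via Lemma \ref{lema}) lets me compare $\xx_{i<\omega}m(\alpha_i)$ with $\xx_{i<\omega}\omega^{d(\alpha_i)}$. By Lemma \ref{lem} the latter equals $\omega^{\beta}$ where $\beta=\nsum_{i<\omega}d(\alpha_i)$. So the only question is whether the extra finite coefficients $r_i$ can raise the supremum above $\omega^{\beta}$. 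This is precisely where cases \eqref{a3} and \eqref{a4} diverge, and it is the step I expect to be the main obstacle.

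To separate the two subcases I would argue as follows. Suppose first we are \emph{not} in the situation of \eqref{a3}, i.e.\ the sequence is not eventually $<\omega$; then there are infinitely many indices $i$ with $d(\alpha_i)\geq 1$, hence infinitely many with $\alpha_i\geq\omega$, so for each $h$ one can find $k>h$ with $\alpha_{k-1}\geq\omega\geq r_h\x\dots$ —more carefully, one shows $P_k^{(m)}\geq P_h^{(\omega^{d})}\x\omega$ absorbs any finite coefficient produced by the first $h$ factors. Thus $\xx_{i<\omega}m(\alpha_i)\leq\xx_{i<\omega}\omega^{d(\alpha_i)}$ and equality with $\omega^{\beta}$ follows, giving \eqref{a4}. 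Now suppose we \emph{are} in \eqref{a3}: $\alpha_i<\omega$ for all $i\geq n$ but the sequence is not eventually $1$, so infinitely many $\alpha_i$ with $i\geq n$ are $\geq 2$. Then for $i\geq n$ we have $d(\alpha_i)=0$, so $\beta=d(\alpha_0)\+\dots\+d(\alpha_{n-1})$, and the partial products stabilize in their ``$\omega$-part'' after stage $n$ at $\omega^{\beta}\cdot(\text{integer})$, while the integer part grows without bound (infinitely many factors $\geq 2$). Hence $\sup_i P_i=\omega^{\beta}\cdot\omega=\omega^{\beta+1}$; combined with $\beta=d(\alpha_0)\+\dots\+d(\alpha_{n-1})$ this is the displayed value in \eqref{a3}.

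Finally I would note that the four cases are genuinely exhaustive and mutually exclusive: \eqref{a1} covers a zero factor; among nonzero sequences, \eqref{a2} covers the eventually-$1$ ones; among the rest, \eqref{a3} covers those eventually $<\omega$ (but not eventually $1$), and \eqref{a4} is the complement, as spelled out in the statement. The one delicate point throughout is the uniform handling of the finite coefficients $r_i$: Lemma \ref{lema}'s bound $m(\alpha)\leq m(\alpha)\x 2$ together with monotonicity (Proposition \ref{factswp}(2)(5)) is exactly what is needed to show these coefficients never contribute more than ``one extra power of $\omega$'', and that only in case \eqref{a3}.
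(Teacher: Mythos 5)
Your proposal is correct and follows essentially the same route as the paper: dispose of the trivial cases \eqref{a1}, \eqref{a2} from the definition, then use Lemma \ref{lem2} to pass to leading monomials, Lemma \ref{lem} to evaluate $\xx_{i<\omega}\omega^{d(\alpha_i)}=\omega^{\beta}$, and absorb the finite coefficients $r_i$ (contributing exactly one extra power of $\omega$ in case \eqref{a3}, and nothing in case \eqref{a4} because some later factor $\omega^{d(\alpha_k)}\geq\omega$ dominates), which is precisely the paper's argument. One aside is misstated---$m(\alpha_i)$ is \emph{not} in general bounded by $\omega^{d(\alpha_i)}\x 2$, and Lemma \ref{lema} bounds $\alpha$ by $m(\alpha)\x 2$, not $m(\alpha)$ by $\omega^{d(\alpha)}\x 2$---but this slip is harmless, since your ``more careful'' coefficient-absorption step (a finite product of finite coefficients is $<\omega$) is the argument that actually carries the proof, exactly as in the paper.
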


Before proving Theorem \ref{descr}, we notice that it
gives an effective way to compute
$\xx  _{i < \omega} \alpha_i$,
for every sequence $( \alpha_i) _{i < \omega} $ 
of ordinals. 
Apply \eqref{a1}  if at least one $\alpha_i$ is equal to  $0$;
if this is not the case, 
apply \eqref{a2}  if the $\alpha_i$ are  eventually $1$;
if not, then exactly one of \eqref{a3} or \eqref{a4} 
 occurs.
Notice that  conditions \eqref{a1} and \eqref{a2} in Theorem \ref{descr} 
might  overlap, and $n$ in 
\eqref{a2} and \eqref{a3} is not uniquely defined,
 but the conditions give the same outcome
in any overlapping case.
Notice also that the expression
$d( \alpha _0) \+ \dots \+ d(\alpha _{n-1}) + 1$ in 
\eqref{a3}  causes no ambiguity, since
$(d( \alpha _0) \+ \dots \+ d(\alpha _{n-1})) + 1=
d( \alpha _0) \+ \dots \+ (d(\alpha _{n-1}) + 1)$. 

 \begin{proof}
The result follows trivially from the definitions  if
some $\alpha_i$ is equal to $0$
or when the sequence is eventually $1$. 

If we are in the case given by \eqref{a3}, then,
for every $ \ell < \omega$, 
there is  $h > n$ such that there are
at least $ \ell$-many $\alpha_i \geq 2$,
where the index $i$ varies between $n$ and $h$.    
Thus
 $ \xx  _{i < h+1} \alpha_i  \geq 
 \omega ^{d( \alpha _0)} \x \dots \x \omega  ^{d(\alpha _{n-1})} \x 2^ \ell $
since $\alpha \geq \omega ^{d( \alpha )}$,
for every nonzero ordinal $\alpha$.
Since $\ell$ is arbitrary,
we get
 $ \xx  _{i < \omega} \alpha_i  
= \sup _{h< \omega } \xx  _{i < h+1} \alpha_i 
\geq 
\sup _{ \ell < \omega }
 ( \omega ^{d( \alpha _0)} \x \dots \x \omega  ^{d(\alpha_{n-1})} \x 2^ \ell )
=
\sup _{ \ell < \omega }
 \omega ^{d( \alpha _0) \+ \dots \+ d(\alpha _{n-1})} 2^ \ell =
 \omega ^{d( \alpha _0) \+ \dots \+ d(\alpha _{n-1}) + 1}$,
since $ \omega^ \varepsilon \x 2 =
 \omega^ \varepsilon  2$ and
$\sup _{p < \omega } \omega ^ \varepsilon p = 
 \omega ^{ \varepsilon +1}   $,
for every ordinal $\varepsilon$.

In the other direction, 
we have 
$ \xx  _{i < \omega} \alpha_i =   \xx  _{i < \omega} m(\alpha_i)  $
from Lemma \ref{lem2},
hence it is enough to prove 
$\xx  _{i < \omega} m(\alpha_i)  \leq \allowbreak
 \omega ^{d( \alpha _0) \+ \dots \+ d(\alpha _{n-1}) + 1}$.  
 If $h < \omega$, and, for every $ i < \omega$,
 letting  $s_i$ be the only natural number such that 
$m(\alpha_i)  =   \omega ^{d( \alpha _i)} s_i$, 
then by associativity and commutativity of $\x$, we get  
$\xx  _{i < h+1} m(\alpha_i)  =
 \omega ^{d( \alpha _0)} \x
s_0 \x  \dots \x \omega  ^{d(\alpha _h)} \x
 s_h 
=
 \omega ^{d( \alpha _0)} \x
 \dots \x \omega  ^{d(\alpha _h)} \x
s_0 \x  \dots \x  s_h 
\leq 
 \omega ^{d( \alpha _0)} \x \dots \x \omega  ^{d(\alpha _{n-1})} \x \omega =
 \omega ^{d( \alpha _0) \+ \dots \+ d(\alpha _{n-1}) + 1}
$,
since $d( \alpha _i) =0$,
for $i \geq n$ and, by construction,
$s_i < \omega $, for every $ i < \omega$.  
Hence
$\xx  _{i < \omega} m(\alpha_i)  =
\sup _{h < \omega } \xx  _{i < h} m(\alpha_i)  \leq
 \omega ^{d( \alpha _0) \+ \dots \+ d(\alpha _{n-1}) + 1}
$.

The last identity in  \eqref{a3} follows from the already mentioned fact that
 $d( \alpha _i) =0$,
for $i \geq n$, hence 
$\beta= \nsum _{i < \omega } d( \alpha _i) =  
\nsum _{i < n} d( \alpha _i) $.

The case given by 
  \eqref{a4} is similar and somewhat easier.
The inequality $ \xx  _{i < \omega} \alpha_i \geq  
 \xx  _{i < \omega}  \omega ^{d(\alpha_i)}$ 
is trivial by monotonicity.

For the converse, we use again the identity
$ \xx  _{i < \omega} \alpha_i =   \xx  _{i < \omega} m(\alpha_i)  $
from Lemma \ref{lem2}.
Arguing as in case   \eqref{a3},
we have that, for every $h< \omega$,
$\xx  _{i \leq  h} m(\alpha_i)  
\leq 
 \omega ^{d( \alpha _0)} \x \dots \x \omega  ^{d(\alpha _h)} \x \omega
$,
but there is some  $k > h$ such that 
 $\alpha_k \geq  \omega $, 
since the members of the sequence are not
eventually $< \omega $.
Hence
 $\xx  _{i \leq h} m(\alpha_i)  
\leq 
 \omega ^{d( \alpha _0)} \x \dots \x \omega  ^{d(\alpha _h)} \x \omega
\leq
 \omega ^{d( \alpha _0)} \x \dots \x 
 \omega  ^{d(\alpha _h)} \x
\dots \x\omega  ^{d(\alpha _k)}
\leq 
\xx  _{i < \omega}  \omega ^{d(\alpha_i)} 
$.
In conclusion,
$ \xx  _{i < \omega} \alpha_i =   \xx  _{i < \omega} m(\alpha_i) =
\sup _{h < \omega } \xx  _{i \leq h} m(\alpha_i)   \leq
\xx  _{i < \omega}  \omega ^{d(\alpha_i)}  $.

The last identity is from Lemma \ref{lem}.
 \end{proof}

Corollary 5.1 in \cite{w}
can be used to provide a more precise evaluation of $\beta$ in case 
\eqref{a4}  in Theorem \ref{descr}.

\begin{corollary} \labbel{cordescr}
Suppose that $( \alpha_i) _{i < \omega} $ is a sequence of ordinals
such that no element of the sequence is $0$ 
and the members
of 
 the  sequence are not
eventually $< \omega $
(thus the sequence  $( d(\alpha_i)) _{i < \omega} $
is not eventually $0$).
Let $\xi$ be the smallest ordinal such that 
$\{ i < \omega \mid d( \alpha _i) \geq \omega ^\xi \}$
is finite, and enumerate 
those $\alpha_i$'s such that  
 $d( \alpha _i) \geq \omega ^\xi $ as 
$\alpha _{i_0}, \dots, \alpha _{i_k} $
(the sequence might be empty).
Then 
$ \xx  _{i < \omega} \alpha_i =
 \omega^ {\beta} $,    
where
$ \beta = (d(\alpha _{i_0}) \+ \dots \+ d(\alpha _{i_k})) + \omega ^\xi $.
\end{corollary}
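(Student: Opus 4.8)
The plan is to reduce the assertion to a computation of an infinite natural \emph{sum} of the degrees, and then to quote Corollary~5.1 of \cite{w}.

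First I would note that the hypotheses place us in case \eqref{a4} of Theorem~\ref{descr}. Indeed, since no $\alpha_i$ equals $0$, clause \eqref{a1} does not apply; and since the sequence is not eventually $< \omega$, neither \eqref{a2} (whose hypothesis is that the sequence be eventually $1$) nor \eqref{a3} (whose hypothesis is that it be eventually $< \omega$) applies either. Hence Theorem~\ref{descr} gives $\xx_{i<\omega}\alpha_i=\omega^{\beta}$ with $\beta=\nsum_{i<\omega}d(\alpha_i)$, and everything reduces to evaluating this natural sum of the degrees.

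Next I would check that the sequence $(\gamma_i)_{i<\omega}$ defined by $\gamma_i=d(\alpha_i)$ falls under Corollary~5.1 of \cite{w}. The condition that $(\alpha_i)$ is not eventually $< \omega$ says precisely that $d(\alpha_i)\geq 1$ for infinitely many $i$, that is, that $(\gamma_i)$ is not eventually $0$; equivalently, $\{i<\omega\mid\gamma_i\geq\omega^{0}\}$ is infinite, so the least ordinal $\xi$ for which $\{i<\omega\mid\gamma_i\geq\omega^{\xi}\}$ is finite satisfies $\xi\geq 1$. Such a $\xi$ exists because the countably many $\gamma_i$ are bounded, so the set in question is empty once $\omega^{\xi}$ exceeds their supremum. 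Since $\gamma_i=d(\alpha_i)$, this $\xi$ and the enumeration $\alpha_{i_0},\dots,\alpha_{i_k}$ of those $\alpha_i$ with $d(\alpha_i)\geq\omega^{\xi}$ are exactly the data that Corollary~5.1 of \cite{w} attaches to the sequence $(\gamma_i)$. That corollary then yields $\beta=\nsum_{i<\omega}\gamma_i=(\gamma_{i_0}\+\dots\+\gamma_{i_k})+\omega^{\xi}=(d(\alpha_{i_0})\+\dots\+d(\alpha_{i_k}))+\omega^{\xi}$, and combining this with $\xx_{i<\omega}\alpha_i=\omega^{\beta}$ finishes the proof.

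I do not expect a genuine obstacle here: the mathematical content sits entirely in Corollary~5.1 of \cite{w}, and the only care needed is the bookkeeping that transfers its hypotheses and parameters into the present notation. If one prefers to avoid the citation, one can reprove the relevant case directly in the style of the proof of Theorem~\ref{descr}: for the lower bound, for large $n$ all of $\gamma_{i_0},\dots,\gamma_{i_k}$ already occur among $\gamma_0,\dots,\gamma_{n-1}$, while the remaining, ``small'', degrees can be natural-summed to approach $\omega^{\xi}$ from below, by the minimality of $\xi$; for the upper bound, any finite natural sum of ordinals of degree $<\xi$ again has degree $<\xi$, hence is $<\omega^{\xi}$, so no partial natural sum exceeds $(\gamma_{i_0}\+\dots\+\gamma_{i_k})+\omega^{\xi}$.
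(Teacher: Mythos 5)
Your proposal is correct and matches the paper's own argument exactly: the paper obtains this corollary precisely by applying case \eqref{a4} of Theorem \ref{descr} to get $\xx_{i<\omega}\alpha_i=\omega^{\beta}$ with $\beta=\nsum_{i<\omega}d(\alpha_i)$, and then quoting Corollary 5.1 of \cite{w} to evaluate that natural sum as $(d(\alpha_{i_0})\+\dots\+d(\alpha_{i_k}))+\omega^{\xi}$. Your bookkeeping transferring the hypotheses and the data $\xi$, $\alpha_{i_0},\dots,\alpha_{i_k}$ is the same as what the paper leaves implicit, so there is nothing to add.
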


We need the results analogous to
Theorem \ref{descr} for the classical ordinal product. 

\begin{theorem} \labbel{ordinary} 
  \begin{enumerate} 
   \item   
If $( \beta _i) _{i < \omega} $  is a sequence of ordinals
and $\beta= \sum _{i < \omega } \beta _i$, then 
\begin{equation*}\labbel{elemordi}
\prod _{i < \omega } \omega ^{ \beta _i} = \omega ^{\beta }   
  \end{equation*} 
\item
If  $( \alpha  _i) _{i < \omega} $  is a sequence of ordinals
which is not eventually $1$, then 
\begin{equation*}\labbel{prikordi} 
 \prod  _{i < \omega} \alpha_i =   \prod  _{i < \omega} m(\alpha_i)  
 \end{equation*}    
\item
Suppose that $( \alpha  _i) _{i < \omega} $  is a sequence of nonzero ordinals
which is not eventually $1$ 
and let $\beta= \sum _{i < \omega } d( \alpha _i) $. Then
\begin{align*} 
 \prod  _{i < \omega} \alpha_i  &=
 \omega ^{d( \alpha _0) + \dots + d(\alpha _{n-1}) + 1} =
  \omega^ {\beta+ 1} 
\quad \text{ if 
 $\alpha_i < \omega $, for all $i \geq  n$}  \\ 
 \prod  _{i < \omega} \alpha_i &=  
 \prod _{i < \omega}  \omega ^{d(\alpha_i)}=
 \omega^ {\beta}  
\quad \text{ if the sequence is not eventually $< \omega $}
\end{align*}    
\end{enumerate} 
\end{theorem}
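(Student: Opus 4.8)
The statement is the ``classical'' counterpart of Theorem~\ref{descr}, and the natural strategy is to mimic the proofs of Lemma~\ref{lem}, Lemma~\ref{lem2} and Theorem~\ref{descr} verbatim, replacing the natural sum $\nsum$ by the ordinary sum $\sum$ and the natural product $\x$ by the ordinary product $\cdot$ throughout. The only thing one must check at each step is whether the property of $\x$ that was invoked has an ordinary analogue; almost all of them do, provided one is careful about \emph{the order of the factors}, since $\cdot$ is not commutative.

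\textbf{Part (1).} For a sequence $(\beta_i)_{i<\omega}$ with partial sums $B_i=\beta_0+\dots+\beta_{i-1}$, one has $\prod_{i<\omega}\omega^{\beta_i}=\lim_{i<\omega}\omega^{\beta_0}\cdot\omega^{\beta_1}\cdots\omega^{\beta_{i-1}}=\lim_{i<\omega}\omega^{B_i}=\omega^{\sup_i B_i}=\omega^\beta$, using $\omega^{\gamma}\cdot\omega^{\delta}=\omega^{\gamma+\delta}$ (a standard property of ordinary exponentiation) and continuity of $\gamma\mapsto\omega^\gamma$. This is literally the proof of Lemma~\ref{lem} with $\+$ replaced by $+$; no new obstacle arises.

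\textbf{Part (2).} Here one shows $\prod_{i<\omega}\alpha_i\le\prod_{i<\omega}m(\alpha_i)$ (the reverse inequality is monotonicity of the infinite ordinary product). As in Lemma~\ref{lem2}, it suffices to bound each finite partial product $\alpha_0\cdots\alpha_{h-1}$ by some longer partial product of the $m(\alpha_i)$'s. The key finitary fact needed is the ordinary-product analogue of $m(\xx_{i<h}\alpha_i)=\xx_{i<h}m(\alpha_i)$, namely that the leading monomial of a finite \emph{ordinary} product is the ordinary product of the leading monomials; this follows from the distributive-on-the-left expansion of $\omega^{\xi}r\cdot(\omega^{\eta}s+\text{lower})$ together with the fact that $\omega^{\xi}r\cdot\omega^{\eta}=\omega^{\xi+\eta}$ dominates the contribution of the lower-degree terms. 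Then, picking $k>h$ with $\alpha_{k-1}\ge2$ (possible since the sequence is not eventually $1$) and using Lemma~\ref{lema} in the form $m(\gamma)\cdot 2\ge\gamma$, one gets $\prod_{i<k}m(\alpha_i)\ge(\prod_{i<k-1}m(\alpha_i))\cdot 2\ge(\prod_{i<h}m(\alpha_i))\cdot 2=m(\prod_{i<h}\alpha_i)\cdot 2\ge\prod_{i<h}\alpha_i$. One must be slightly more careful than in the commutative case: the factor $\alpha_{k-1}\ge2$ must be the \emph{last} factor of the partial product for the estimate $\prod_{i<k}m(\alpha_i)\ge(\prod_{i<k-1}m(\alpha_i))\cdot 2$ to hold, and $m(\gamma)\cdot 2$ bounds $\gamma$ only when the $2$ is on the right --- but both of these are automatic with the indexing above, so this is the main (mild) point to get right.

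\textbf{Part (3).} Assume the $\alpha_i$ are nonzero and the sequence is not eventually $1$, and set $\beta=\sum_{i<\omega}d(\alpha_i)$. \emph{First case:} $\alpha_i<\omega$ for all $i\ge n$. For the lower bound, for each $\ell<\omega$ one finds $h>n$ with at least $\ell$ indices $i$ in $[n,h)$ having $\alpha_i\ge2$, whence $\prod_{i<h}\alpha_i\ge\omega^{d(\alpha_0)}\cdots\omega^{d(\alpha_{n-1})}\cdot2^\ell=\omega^{d(\alpha_0)+\dots+d(\alpha_{n-1})}\cdot2^\ell$; taking the sup over $\ell$ and using $\sup_p\omega^\varepsilon p=\omega^{\varepsilon+1}$ gives $\prod_{i<\omega}\alpha_i\ge\omega^{d(\alpha_0)+\dots+d(\alpha_{n-1})+1}$. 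For the upper bound, use part~(2) to replace $\alpha_i$ by $m(\alpha_i)=\omega^{d(\alpha_i)}s_i$ with $s_i<\omega$; since $d(\alpha_i)=0$ for $i\ge n$, a partial product $\prod_{i\le h}m(\alpha_i)$ is of the form $\omega^{d(\alpha_0)+\dots+d(\alpha_{n-1})}\cdot(\text{finite})$, which is $<\omega^{d(\alpha_0)+\dots+d(\alpha_{n-1})+1}$, so the sup is $\le\omega^{d(\alpha_0)+\dots+d(\alpha_{n-1})+1}$. The identification with $\omega^{\beta+1}$ uses $d(\alpha_i)=0$ for $i\ge n$, so $\beta=\sum_{i<n}d(\alpha_i)$. \emph{Second case:} the sequence is not eventually $<\omega$. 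The lower bound is monotonicity ($\alpha_i\ge\omega^{d(\alpha_i)}$) together with part~(1). For the upper bound, again pass to the $m(\alpha_i)$ via part~(2); a partial product up to $h$ is bounded by $\omega^{d(\alpha_0)}\cdots\omega^{d(\alpha_h)}\cdot\omega$, and since there is $k>h$ with $\alpha_k\ge\omega$ (so $d(\alpha_k)\ge1$), this extra factor $\omega$ is absorbed into $\omega^{d(\alpha_k)}$ of a longer partial product, giving the bound $\le\prod_{i<\omega}\omega^{d(\alpha_i)}$; with part~(1) this equals $\omega^\beta$.

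\textbf{Expected obstacle.} There is essentially no deep obstacle --- the whole point, as the introduction stresses, is that the same technique transfers from natural to ordinary operations. The only thing requiring genuine attention is bookkeeping the non-commutativity of $\cdot$ in Parts (2) and (3): one must always adjoin the ``large'' factors (the $\alpha_i\ge2$, resp.\ $\alpha_i\ge\omega$) on the right end of the partial products, and apply Lemma~\ref{lema} only in the right-multiplication form $\gamma\le m(\gamma)\cdot2$, never on the left. Once the indices are arranged so that the extra factors sit on the right, every inequality goes through exactly as in the proofs of Lemma~\ref{lem2} and Theorem~\ref{descr}.
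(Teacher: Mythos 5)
Your proposal is correct and follows essentially the same route as the paper, whose proof of this theorem is precisely ``repeat the proofs of Lemma \ref{lem}, Lemma \ref{lem2} and Theorem \ref{descr} with $\+,\x$ replaced by $+,\cdot$'', the key transferred ingredient being the identity $m(\prod_{i<h}\alpha_i)=\prod_{i<h}m(\alpha_i)$ for ordinary products. Your extra bookkeeping of non-commutativity (keeping the factors $\geq 2$, resp.\ $\geq\omega$, on the right and using Lemma \ref{lema} only in the form $\gamma\leq m(\gamma)\cdot 2$) is exactly the point the paper leaves implicit, and it is handled correctly.
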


\begin{proof}
(1) Like the proof of Lemma \ref{lem},
using the identity  
$ \omega ^{ \beta _0} \omega ^{ \beta _1} \dots  \allowbreak  \omega ^{ \beta _{i-1}} 
\allowbreak =
 \omega ^{ \beta _0 + \beta _1 + \dots + \beta _{i-1} }$.

(2) Like the proof of Lemma \ref{lem2}.
In fact, we do have 
$m( \prod _{i < h} \alpha_i) = 
  \prod _{i < h} m(\alpha_i) $
and this is enough for the proof.

(3) is proved as Theorem \ref{descr}.
 \end{proof}

Theorems \ref{descr} and \ref{ordinary} 
can be used to
``lift'' 
some results from infinitary sums to infinitary 
products.
To show how the method works,
we first present a simple example,
though only feebly connected with the rest of this note. 

Sierpi{\'n}ski \cite{Sie} 
showed that a sum $\sum _{i < \omega} \alpha_{i}$ 
of ordinals can assume only finitely many values,
by permuting the $\alpha_i$'s.
A proof can be found also in \cite{w}.
Then Sierpi{\'n}ski  in \cite{Sie2} showed  the analogous result for an 
infinite product. 
Using Theorem \ref{ordinary}  we show that the result
 about products is immediate
from the result about sums. 

\begin{corollary} \labbel{corsier} 
If $( \alpha  _i) _{i < \omega} $ is a sequence  of ordinals,
one obtains only a finite number of ordinals by considering
all products of the form  $\prod _{i \in I} \gamma _i $,
where $( \gamma _i) _{i < \omega} $
is a \emph{permutation} of
 $( \alpha_i) _{i < \omega} $,
that is, there exists a bijection
$ \pi : \omega  \to \omega $   
such that $\gamma_i =  \alpha_ {\pi(i)} $
for every $i < \omega$. 
\end{corollary}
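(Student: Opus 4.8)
The plan is to ``lift'' Sierpi{\'n}ski's finiteness theorem for permuted infinite ordinal \emph{sums} to products, exactly as advertised in the paragraph preceding the statement: for each permutation $(\gamma_i)_{i<\omega}$ of $(\alpha_i)_{i<\omega}$ I would rewrite $\prod_{i<\omega}\gamma_i$ in a normal form that depends on the permutation only through an ordinary infinite sum of degrees, and then invoke the known result for sums. The first observation is that the three properties ``$\alpha_j=0$ for some $j$'', ``$(\alpha_i)$ is eventually $1$'' and ``$(\alpha_i)$ is eventually $<\omega$'' are invariant under permutation, since permuting the sequence does not change the values occurring in it (with multiplicities); hence the same case below applies to \emph{every} permutation of $(\alpha_i)$. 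If some $\alpha_j=0$, then $\prod_{i<\omega}\gamma_i=0$ for every permutation, a single value. If every $\alpha_i\neq 0$ but $(\alpha_i)$ is eventually $1$, say $\alpha_i=1$ except for the $k$ indices in a finite set $F$, then for every permutation all but finitely many $\gamma_i$ equal $1$, so $\prod_{i<\omega}\gamma_i$ is just the finite ordinary product of the non-$1$ entries of $(\gamma_i)$ taken in the order in which they occur; there are at most $k!$ such orderings, hence only finitely many values.

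The interesting case is when every $\alpha_i\neq 0$ and $(\alpha_i)$ is not eventually $1$. Then every permutation $(\gamma_i)$ of $(\alpha_i)$ again consists of nonzero ordinals and is not eventually $1$, so Theorem~\ref{ordinary}(3) applies to it and gives
\[
\prod_{i<\omega}\gamma_i=\omega^{\sigma(\gamma)+\varepsilon},
\]
where $\sigma(\gamma)=\sum_{i<\omega}d(\gamma_i)$ is the ordinary infinite sum of the degrees and $\varepsilon\in\{0,1\}$ equals $1$ exactly when $(\alpha_i)$ is eventually $<\omega$; in particular $\varepsilon$ does not depend on the permutation. Since $d$ is applied coordinatewise, $(d(\gamma_i))_{i<\omega}$ is itself a permutation of $(d(\alpha_i))_{i<\omega}$, so by Sierpi{\'n}ski's theorem (\cite{Sie}; see also \cite{w}) the quantity $\sigma(\gamma)$ ranges over a finite set as $(\gamma_i)$ ranges over all permutations of $(\alpha_i)$. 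Consequently the set of products $\prod_{i<\omega}\gamma_i$ is finite, and combining this with the two degenerate cases proves the corollary.

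I do not expect a genuine obstacle here; this is essentially the ``method'' the paper is illustrating. The two points that deserve a moment's care are (a) checking that the structural trichotomy above is permutation-invariant, so that one uniform case analysis suffices, and (b) recording that in the main case the product is an $\omega$-power whose exponent is, up to the fixed additive constant $\varepsilon$, an ordinary infinite sum of the degrees — which is precisely the quantity Sierpi{\'n}ski's theorem controls. If one wanted an explicit count, the number of values is $1$ in the first case, at most $k!$ in the second, and at most the bound supplied by Sierpi{\'n}ski's theorem applied to $(d(\alpha_i))_{i<\omega}$ in the third.
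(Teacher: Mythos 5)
Your proposal is correct and follows essentially the same route as the paper: dispose of the degenerate cases (a zero factor, eventually $1$), then use Theorem~\ref{ordinary}(3) to write each permuted product as an $\omega$-power whose exponent is (up to a fixed $+1$) the ordinary infinite sum of the degrees, and invoke Sierpi\'nski's finiteness theorem for permuted sums applied to $(d(\alpha_i))_{i<\omega}$. The only cosmetic difference is that in the eventually-$<\omega$ subcase the paper counts the finitely many rearrangements of the finitely many nonzero degrees directly instead of citing Sierpi\'nski's theorem, which your uniform appeal also covers.
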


\begin{proof}
This is trivial if some $\alpha_i$
is $0$, or if the $\alpha_i$'s are eventually $1$,
so let us assume that none of the above cases occurs.

If the sequence is eventually $< \omega $,
then the first equation in Theorem \ref{ordinary}(3)
shows that we obtain only a finite number of products by taking rearrangements
of the factors, since the resulting products are
given by $ \omega ^{ \delta +1} $, where 
$\delta$ is a sum of
$d( \alpha _0) , \dots, d( \alpha _{n-1})$, taken in some order,
but there is only a finite number of rearrangements of this finite set,
hence there are only a finite number of possibilities for $\delta$
(notice that if $\alpha_i< \omega $, then 
$d(\alpha_i)=0$, hence the degrees of finite ordinals
do not contribute to the sum).

In the remaining case, the products obtained by rearrangements 
have the form
$ \omega^ \delta $,
with $\delta = \sum _{i < \omega } d( \gamma _i) $,  
by the second equation
in Theorem \ref{ordinary}(3).    
The quoted result from \cite{Sie} shows that we have only a
 finite number of possibilities for $\delta$,
hence there are only a finite number of possibilities for the 
values of the
rearranged products.
\end{proof}

We now use Theorems \ref{descr} and \ref{ordinary} 
in a slightly more involved situation 
in order to transfer some results from \cite{w} about infinite natural sums
to results about infinite natural products.

\begin{corollary} \labbel{segue}
For every sequence $( \alpha  _i) _{i < \omega} $  of ordinals
 there is $ m < \omega$ such that, for every $n \geq m$,
\begin{align} \labbel{1} 
 \xx_{n\leq i < \omega } \alpha _i 
& =
 \prod_{n\leq i < \omega } \alpha _i  \quad \text{ and } 
\\ 
\begin{split}    \labbel{2}
 \xx _{ i < \omega } \alpha _i
& =
( \alpha _0 \x \dots \x \alpha _{n-1}) \cdot 
  \xx _{n\leq i < \omega } \alpha _i \\
 & =
( \alpha _0 \x \dots \x \alpha _{n-1}) \cdot 
  \prod_{n\leq i < \omega } \alpha _i 
 \end{split}
 \end{align}
 and if, moreover, every $\alpha_i$ is nonzero
and the sequence is not eventually $1$, then
\begin{align} \labbel{eqp}
\begin{split} 
\xx _{ i < \omega } \alpha _i
& =
  \omega ^{ \beta  _0 \+ \dots \+ \beta  _{n-1}} \cdot
  \xx _{n\leq i < \omega } \alpha _i \\
 & =
  \omega ^{ \beta  _0 \+ \dots \+ \beta  _{n-1}} \cdot
  \prod_{n\leq i < \omega } \alpha _i 
\end{split} 
\end{align}
where $\beta_i = d( \alpha _i)$,
for every $i < \omega$. 
\end{corollary}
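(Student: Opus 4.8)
The plan is to deduce everything from Theorems \ref{descr} and \ref{ordinary}, exploiting the fact that in every nontrivial case those theorems compute $\xx$ and $\prod$ by the \emph{same} formula in terms of the degrees (and of a finite initial product). First I would dispose of the degenerate cases: if some $\alpha_i=0$ then both sides of \eqref{1} and \eqref{2} are $0$ for any $m$; if the sequence is eventually $1$, pick $m$ so that $\alpha_i=1$ for all $i\geq m$, whence both $\xx_{n\leq i<\omega}\alpha_i$ and $\prod_{n\leq i<\omega}\alpha_i$ equal $1$ for every $n\geq m$, and \eqref{2} reduces to the finitary identity $\xx_{i<\omega}\alpha_i=\alpha_0\x\cdots\x\alpha_{n-1}$ from Clause \eqref{a2} (together with the trivial observation that multiplying this by $1$ on the right, in the ordinary sense, changes nothing). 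So from now on assume all $\alpha_i\neq 0$ and the sequence is not eventually $1$.

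Next I would choose $m$. The natural choice is: let $m<\omega$ be least such that for every $i\geq m$ either $\alpha_i<\omega$, or else the set of indices $j\geq m$ with $\alpha_j\geq\omega$ is infinite — more precisely, I want $m$ large enough that the \emph{tail} sequence $(\alpha_i)_{i\geq n}$ for $n\geq m$ falls under exactly the same clause (\eqref{a3} vs.\ \eqref{a4}) as the full sequence, and in the \eqref{a3} case that no index $i\geq m$ has $\alpha_i\geq\omega$. Such an $m$ exists: either the sequence is eventually $<\omega$, and then I take $m$ past the last infinite term; or it is not, and then every tail is still not eventually $<\omega$, so I may take $m=0$ (any $m$ works). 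Note also that for $n\geq m$ the tail is still not eventually $1$ (else the whole sequence would be eventually $1$). With this $m$ fixed, I would prove \eqref{1} by comparing the two sides via the explicit formulas: in the eventually-$<\omega$ case, Theorem \ref{descr}\eqref{a3} gives $\xx_{n\leq i<\omega}\alpha_i=\omega^{d(\alpha_n)\+\cdots\+d(\alpha_{n'-1})+1}$ and Theorem \ref{ordinary}(3) gives $\prod_{n\leq i<\omega}\alpha_i=\omega^{d(\alpha_n)+\cdots+d(\alpha_{n'-1})+1}$, where $n'\geq n$ is any index past which all terms are $1$; but only finitely many of $d(\alpha_n),\dots,d(\alpha_{n'-1})$ are nonzero (only the infinite $\alpha_i$ with $i<m$ contribute, but those are excluded since $n\geq m$) — actually here \emph{all} the relevant degrees are $0$, so both sides equal $\omega^{0+1}=\omega$. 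In the not-eventually-$<\omega$ case, Theorem \ref{descr}\eqref{a4} and Theorem \ref{ordinary}(3) give $\omega^{\nsum_{i\geq n}d(\alpha_i)}$ and $\omega^{\sum_{i\geq n}d(\alpha_i)}$ respectively; these need not be equal for a single sequence, so this is where the choice of $m$ must do real work — I will have to arrange $m$ so that the tail degrees $d(\alpha_i)$, $i\geq m$, are all $0$ or $1$ and there are infinitely many $1$'s among them, whence both the natural sum and the ordinary sum of the tail degrees equal $\omega$. Achieving that grouping is the crux: one regroups the sequence using Proposition \ref{factswp}(4) so that finitely many ``large'' degrees are absorbed into the first $m$ factors and every later factor has degree $\leq 1$.

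Finally, \eqref{2} follows from \eqref{1} plus an associativity/absorption argument. For the ordinary infinite product one has the standard identity $\prod_{i<\omega}\alpha_i=(\alpha_0\cdots\alpha_{n-1})\cdot\prod_{n\leq i<\omega}\alpha_i$; for the natural infinite product I would instead argue directly with partial products and suprema, writing $P_{n+k}=(\alpha_0\x\cdots\x\alpha_{n-1})\x(\alpha_n\x\cdots\x\alpha_{n+k-1})$, taking $\sup_k$, and using that for a \emph{fixed} ordinal $\gamma=\alpha_0\x\cdots\x\alpha_{n-1}$ and a nondecreasing sequence $(Q_k)$ with limit $Q$ one has $\sup_k(\gamma\x Q_k)=\gamma\x Q$ when the $Q_k$ are nonzero, \emph{and} that $\gamma\x\delta=\gamma\cdot\delta$ whenever $\delta$ is of the form $\omega^{\varepsilon}$ or more generally when $\delta$ is a limit power — precisely the situation in \eqref{1}, where $\xx_{n\leq i<\omega}\alpha_i$ came out as $\omega^{\beta}$ or $\omega^{\beta+1}$. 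This last point — that natural product by a single factor $\gamma$ on the left coincides with ordinary product when the right factor is a power of $\omega$, because $\gamma\x\omega^\varepsilon=\gamma\cdot\omega^\varepsilon$ for $\varepsilon\geq 1$ (absorption of the lower terms of $\gamma$'s Cantor form) — is what lets me replace $\x$ by $\cdot$ in \eqref{2}. Then \eqref{eqp} is the special case in which, under the extra hypothesis, $\alpha_0\x\cdots\x\alpha_{n-1}$ and $\omega^{\beta_0\+\cdots\+\beta_{n-1}}$ differ only in lower-order terms that get absorbed upon right-multiplication by the limit ordinal $\xx_{n\leq i<\omega}\alpha_i=\prod_{n\leq i<\omega}\alpha_i$; concretely, $m(\alpha_0\x\cdots\x\alpha_{n-1})=\omega^{\beta_0\+\cdots\+\beta_{n-1}}\cdot s$ for some finite $s$ by Lemma \ref{lema}-type reasoning, and multiplying on the right by a limit ordinal kills both the trailing terms and the finite multiplier $s$. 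I expect the main obstacle to be pinning down the single value of $m$ that simultaneously makes \eqref{1} hold for \emph{all} $n\geq m$ and makes all three displays typecheck in the not-eventually-$<\omega$ case; once the regrouping via Proposition \ref{factswp}(4) is set up correctly, the remaining manipulations are routine ordinal arithmetic (absorption laws for $\cdot$ versus $\x$, continuity of exponentiation, and Theorems \ref{descr}, \ref{ordinary}).
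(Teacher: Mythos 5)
There is a genuine gap at exactly the point you identify as the crux. In the not-eventually-$<\omega$ case, equality \eqref{1} amounts to $\omega^{\nsum_{n\le i<\omega} d(\alpha_i)}=\omega^{\sum_{n\le i<\omega} d(\alpha_i)}$ for all $n\ge m$, and your plan to secure this --- choose $m$ and regroup via Proposition \ref{factswp}(4) so that all tail degrees are $0$ or $1$ with infinitely many $1$'s --- cannot work. The degrees of the tail factors are intrinsic: if $\alpha_i=\omega^ \omega $ for every $i$ (or if $d(\alpha_i)=i$), no choice of $m$ makes the tail degrees $\le 1$, and regrouping only merges finitely many factors into a head while \emph{increasing} the degrees of the merged blocks; it cannot touch the infinitely many remaining factors, nor does it say anything about the tails of the \emph{original} sequence, which is what \eqref{1} is about. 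The missing ingredient is precisely the theorem on infinite natural sums that the paper cites (\cite[Theorem 3.1]{w}): for every sequence $(\beta_i)_{i<\omega}$ there is $m$ such that for all $n\ge m$ one has $\nsum_{n\le i<\omega}\beta_i=\sum_{n\le i<\omega}\beta_i$, together with the decomposition $\nsum_{i<\omega}\beta_i=(\beta_0\+\dots\+\beta_{n-1})+\nsum_{n\le i<\omega}\beta_i$. Applied to $\beta_i=d(\alpha_i)$ and combined with Theorems \ref{descr}\eqref{a4} and \ref{ordinary}(3), this gives \eqref{1} and \eqref{eqp} at once; without it (or a reproof of it) your argument does not go through, and the second identity is also needed, since in general the natural sum of the whole sequence is \emph{not} the head natural sum plus the tail natural sum for arbitrary $n$.

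Two auxiliary claims in your derivation of \eqref{2} are also false as stated. The natural product is not continuous in one argument: with $\gamma=\omega+1$ and $Q_k=k$ one has $\sup_k(\gamma\x Q_k)=\omega^2$ while $\gamma\x\omega=\omega^2+\omega$, so $\sup_k(\gamma\x Q_k)\ne\gamma\x\sup_k Q_k$. Likewise $\gamma\x\omega^ \varepsilon \ne\gamma\cdot\omega^ \varepsilon $ in general (same example: $(\omega+1)\x\omega=\omega^2+\omega$ versus $(\omega+1)\cdot\omega=\omega^2$); the natural product does not absorb lower monomials, only ordinary right-multiplication by a limit power of $\omega$ does. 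Your final absorption step for \eqref{eqp} (that $m(\alpha_0\x\dots\x\alpha_{n-1})=\omega^{\beta_0\+\dots\+\beta_{n-1}}\cdot s$ and that ordinary right-multiplication by $\omega^ \beta $, $\beta\ge 1$, kills $s$ and the trailing terms) is correct and is essentially the computation the paper uses; but the paper obtains \eqref{eqp} first, from the displayed sum decomposition and $\omega^{\beta}=\omega^{\beta_0\+\dots\+\beta_{n-1}}\cdot\omega^{\beta'}$, and only then derives \eqref{2} by that absorption computation, rather than passing through a continuity property of $\x$ that fails.
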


\begin{proof}
The result is trivial if the sequence is eventually $1$;
moreover, (6) is  trivial
if some $\alpha_i$ is $0$. Furthermore,
(5) is trivial if, for every $i< \omega $, there is $j>i$ such that 
$\alpha_j=0$. Otherwise, by taking $m$ large enough,
we have $ \alpha _i >0$,  for $i>m$.
Henceforth it is enough to prove the result in the case when 
the sequence is not eventually $1$ and all the $\alpha_i$'s
are nonzero.       

We shall first prove \eqref{1} and  \eqref{eqp}
and then derive \eqref{2}. 
If the sequence is eventually $< \omega$,
then \eqref{1} is trivial, since in this case, for large enough $n$,
both sides are equal to $ \omega$. Then
\eqref{eqp} is immediate from equation \eqref{a3} in 
Theorem \ref{descr}, since    
$ \omega ^{ \beta  _0 \+ \dots \+ \beta  _{n-1}} \omega =  
\omega ^{ \beta  _0 \+ \dots \+ \beta  _{n-1}+1}$.  

Suppose now that the sequence  $( \alpha_i) _{i < \omega} $ is not eventually $< \omega$, hence the sequence $( \beta _i) _{i < \omega} $ 
is not eventually $0$.
By \cite[Theorem 3.1]{w}, 
there is $ m < \omega$ such that, for every $n \geq m$,
we have 
$\nsum _{n\leq i < \omega } \beta _i 
=
 \sum _{n\leq i < \omega } \beta _i $.
Fixing some 
$n \geq m$ and 
letting $ \beta ' = \nsum _{n\leq i < \omega } \beta _i $,
we get
$\xx _{n\leq i < \omega } \alpha  _i  = \omega ^{ \beta '}
=  \prod _{n\leq i < \omega } \alpha  _i$ 
from, respectively, 
equation \eqref{a4} in Theorem \ref{descr}  
and the last equation in 
Theorem \ref{ordinary}(3). 
This proves 
 \eqref{1}.

Letting 
$ \beta  = \nsum _{ i < \omega } \beta _i $,
we notice that in  \cite[Theorem 3.1]{w}
it has also been proved that
$ \beta =
  ( \beta _0 \+ \dots \+ \beta _{n-1} )
+
\beta '$.
Using the above identity and applying 
equation \eqref{a4} in Theorem \ref{descr}  twice, we get
$\xx _{i < \omega } \alpha  _i  = \omega ^{ \beta } =
  \omega ^{ \beta  _0 \+ \dots \+ \beta  _{n-1}} \omega ^{ \beta '}=
  \omega ^{ \beta  _0 \+ \dots \+ \beta  _{n-1}}   \xx _{n\leq i < \omega } \alpha _i $,
that is, \eqref{eqp} (the second identity in  \eqref{eqp} could
be proved in the same way, but now it follows immediately
from \eqref{1}).  

Equation \eqref{2} remains to be proved.
We shall prove that in the nontrivial cases
the expressions given by 
\eqref{2} and \eqref{eqp} are equal.
One direction is trivial, since
$  \omega ^{ \beta  _0 \+ \dots \+ \beta  _{n-1}}=
  \omega ^{ \beta  _0} \x \dots \x \omega ^{ \beta  _{n-1}}
\leq 
 \alpha _0 \x \dots \x \alpha _{n-1}$.
For the other direction, let us observe that, in 
the nontrivial cases,
again by Theorem \ref{descr}, 
$  \xx _{n\leq i < \omega } \alpha _i$ has the form 
$ \omega^ \beta $, for some $\beta \geq 1$.
Then
$( \alpha _0 \x \dots \x \alpha _{n-1}) \cdot  \xx _{n\leq i < \omega } \alpha _i
=
( \alpha _0 \x \dots \x \alpha _{n-1}) \cdot  \omega ^ \beta 
\leq
m( \alpha _0 \x \dots \x \alpha _{n-1}) \cdot 2 \cdot  \omega ^ \beta 
=
(m( \alpha _0) \x \dots \x m(\alpha _{n-1})) \cdot  \omega ^ \beta 
=
(\omega ^{d( \alpha _0)}s_0 \x \dots \x \omega ^{d( \alpha _{n-1}) }s _{n-1})
 \cdot \omega ^ \beta 
=
(\omega ^{ \beta  _0} \x \dots \x  \omega ^{ \beta  _{n-1}}   \x s_o \dots s _{n-1})
  \cdot  \omega ^ \beta 
=
\omega ^{ \beta _0 \+ \dots \+ \beta _{n-1} } \cdot  s_o \dots s _{n-1}
  \cdot  \omega ^ \beta 
=
\omega ^{\beta _0 \+ \dots \+ \beta _{n-1}} 
  \cdot  \omega ^ \beta 
=
 \omega ^{\beta _0 \+ \dots \+ \beta _{n-1}}  
\cdot  \xx _{n\leq i < \omega } \alpha _i$,
where we used Lemma \ref{lema} and the facts that 
$k \omega ^ \beta = \omega ^ \beta$,
whenever  $ k < \omega$ and $\beta \geq 1$,
and that
$ \omega^ \xi  \x k = \omega^ \xi  \cdot  k$,
for all  ordinals $ k < \omega$ and $\xi$.
\end{proof}

\section{An order-theoretical characterization} \labbel{otc} 

\subsection{} \labbel{1s} We refer to, e.~g., Harzheim \cite{H} for a general 
reference about ordered sets.
As usual, when no risk of ambiguity is present, we shall 
denote a (partially) ordered set $(P, \leq )$ simply as $P$.
However, in many situations, we shall have several different orderings
on the same set; in that case we shall
explicitly indicate the order. It is sometimes convenient to define $\leq$ 
in terms of the associated $<$ relation and conversely.
As a standard convention,
$a \leq b$ is equivalent to ``either $a=b$ or $a < b$''
(strict disjunction).
We shall be quite informal about the distinction
and we shall use either $\leq$ or $<$ 
case by case according to convenience,
even when we are dealing with (essentially) the same order.

In order to avoid notational ambiguity, let us denote the cartesian product 
of a family $( A_i) _{i \in  I} $ of sets by 
$\bigtimes _{i \in I } A_i$.
If each $A_i$ is an ordered set, with the order denoted by $\leq_i$,
then  a partial order $\leq_{\times}$  
can be defined on $\bigtimes _{i \in I } A_i$ 
componentwise.
Namely, we put 
$a \leq_{\times} b$ if and only if 
$a_i \leq_i b_i$, for every $i \in I$.  
Apparently, for our purposes, the above definition has little use 
when dealing with ordinals
(more precisely, order-types of well-ordered sets),
since ordinals are linearly ordered, but
generally the above construction furnishes only a partially ordered set.
However, see below for uses of 
the ordered set $( \bigtimes _{i \in I } A_i, \leq_{{\times}})$.

\subsection{} \labbel{2s} An order theoretical characterization
of the ordinal product can be given using 
lexicographic products.
If $(I , \leq _I)$ is reverse-well-ordered
and  each $A_i$ is an ordered set,
then the \emph{anti-lexicographic order}
  $\LL {i \in I } A_i = 
(\LL {i \in I } A_i , \leq_* )$ on the set $\bigtimes _{i \in I } A_i$ 
is obtained by putting 
  \begin{enumerate}    \item []
$a \leq_* b$ if and only if  either $a = b $, or
$a_i <_i  b_i$, where $i$ is the largest element of $I$ such that 
$a_i \neq b_i$.
   \end{enumerate} 
In other words, $\leq_*$ orders 
  $\LL {i \in I } A_i $ by the \emph{last} difference. 
The definition makes sense, since $I$ is reverse-well-ordered.
It turns out that if each $A_i$ is linearly ordered,
then    $\LL {i \in I } A_i$ is linearly ordered and
if in addition $I$ is finite, then   $\LL {i \in I } A_i$ is well-ordered.
Moreover, for two ordinals $\alpha_0$ and
$\alpha_1$, it happens that  $\alpha_0 \alpha _1$ 
is exactly the order-type of
 $\LL {i < 2} \, \alpha _i$.
This can be obviously generalized to \emph{finite} products.

A similar characterization
can be given for  infinite products,
but some details should be made precise.
Suppose that 
 each $A_i$ is an ordered set with order $\leq_i$ and  
with a specified element $0_i \in A_i$. 
If $ a \in \bigtimes _{i \in I } A_i$,
the \emph{support} $supp(a)$ of $ a$ 
is the set $\{ i \in I \mid a_i \neq 0_i \}$.  
Let $\bigtimes _{i \in I }^0 A_i $
be the subset of 
$ \bigtimes _{i \in I } A_i$
consisting of those elements
with finite support. 
Of course, $\bigtimes _{i \in I }^0 A_i $
inherits a partial  order $\leq _{\times}^0$ as a suborder of 
$ (\bigtimes _{i \in I } A_i , \leq _{\times})$.
If $I$ is linearly ordered,
we can consider  another  order   $ \LLL {i \in I } A_i =
(\LLL {i \in I } A_i, \leq _L)$  on 
the set $\bigtimes _{i \in I }^0 A_i $ defined as follows.
  \begin{enumerate}    \item [(*)]
$a \leq_L b$ if and only if  either $a = b $, or
$a_i <_i b_i$, where $i$ is the largest element of 
$supp(a) \cup supp(b)$ 
 such that 
$a_i \neq b_i$.
   \end{enumerate} 

The definition makes sense, since $supp(a) \cup supp(b)$ 
is finite and $I$ is linearly ordered. 
Of course, when $I$ is finite, 
$\bigtimes _{i \in I }^0 A_i $ 
and $\bigtimes _{i \in I } A_i $ are the same set and 
$\LLL {i \in I }$ and $\LL {i \in I }$ are the same order. 
It is known that,
for every sequence 
$( \alpha_i) _{i < \delta } $
of ordinals, 
  $\LLL {i <\delta  } \, \alpha _i$
is well-ordered and has order-type
$\prod _{i <  \delta }  \alpha _i $.
Here the specified element $0_i$ is always 
chosen to be the ordinal $0$.
See 
\cite[III, \S\ 10 1.2 and \S\ 11 Satz 7]{Bac},
Hausdorff \cite[\S\ 16]{Ha} and Matsuzaka \cite{M} for details. 
Of course,  we could have seen just by cardinality considerations 
that  $\bigtimes _{i < \delta  } A_i $
does  not work in order 
to obtain  an order theoretical characterization of
$\prod _{i  <  \delta }  \alpha _i $
in the  case when $\delta$ is infinite.

\subsection{} \labbel{3s} 
Dealing now with  natural products,
a characterization in the finite case has been 
found by Carruth \cite{Car}.
He proved  that 
$\alpha_0 \otimes \alpha _1 $ 
is the largest ordinal
which is the order-type of some  linear extension 
of the componentwise order 
on $\alpha_0 \times \alpha _1 $. 
Here   $\alpha_0 \times \alpha _1 $
is ordered as   \mbox{$(\bigtimes _{i < 2} 
 \alpha _i , \leq _ \times )$}
in the above notation.
Carruth result includes the proof 
that such an ordinal exists,
that is, 
that each  such linear extension is  a well-order and that
the set of order-types
of such linear extensions has a maximum,
not just a supremum.
From the modern point of view, 
this can be seen as a special case of 
theorems by Wolk \cite{Wo} and 
de Jongh, Parikh \cite{dJP}, since the product of
two well quasi-orders
 (in particular, two well-orders)
 is still a well quasi order. 
See, e.~g.,  \cite{Mi}. 
Carruth result 
obviously extends to the case of any finite number of factors.

\subsection{} \labbel{4s} 
Some difficulties are encountered
when trying to unify the  results
recalled in \ref{2s} and  \ref{3s}.
Let us limit ourselves to the simplest infinite case 
of an $ \omega$-indexed sequence, 
which is the main theme of the present note.
It would be natural to 
consider
the supremum of the order-types
of well-ordered linear extensions of the 
restriction $\leq _ {\times}^0$   of $\leq _ {\times}$  to
 $\bigtimes ^0_{i <  \omega }  \alpha _i$.
However, just considering
 $\bigtimes ^0_{i <  \omega }  2$,
we see that
 $(\bigtimes ^0_{i < \omega }  2 , \leq _ {\times}^0 )$
has linear extensions which are not well-ordered.
Indeed, for every $j < \omega$,
let  $b^j \in \bigtimes ^0_{i < \omega } 2$ 
be defined by $b_j^j = 1$ and  $b_i^j = 0$  if $i \neq j$.
Then the $b^j$'s form a countable set of 
pairwise $\leq^0_{\times}$-incomparable elements
of $\bigtimes ^0_{i < \omega }  2$,
 hence every countable linear order 
is isomorphic to a subset of some linear extension 
of $\bigtimes ^0_{i < \omega }  2$
(e.~g., by \cite[Theorem 3.3]{H}).
Even if we restrict ourselves to well-ordered extensions
of $\bigtimes ^0_{i < \omega }  2$, 
we get from the above considerations that the supremum of
their order-types is $ \omega_1$, hence this supremum 
is not a maximum 
and anyway it is too large to have the intended meaning,
that is, $ \omega = \xx _{i < \omega  } 2 $.

The situation is parallel to \cite{w} and, as in \cite{w},
an order-theoretical characterization can be found
provided we restrict  ourselves to linear extensions satisfying
some finiteness condition.

\subsection{} \labbel{5s} 
We need a bit more  notation 
in order to state the next theorem.
Recall that if  $( \alpha_i) _{i < \omega} $ 
is a sequence of ordinals, then  
$\bigtimes _{i < \omega  }^0  \alpha _i $
is the set of the sequences with finite support
and the (partial) order $\leq_{\times}^0$ is defined componentwise.
If $a,b \in \bigtimes _{i < \omega  }^0  \alpha _i $ 
and $a \neq b$, let $  \diff(a,b)$  be the largest element $i$ of 
$supp(a) \cup supp(b)$ 
 such that 
$a_i \neq b_i$.
Thus (*) above introduces a linear order $< _L$  on
$\bigtimes _{i < \omega  }^0  \alpha _i $ defined by
$a <_L b$ if and only if $a_i < b_i$,
for  $i = \diff(a,b)$ (here $<$ is the standard order
on  $\alpha_i$, hence there is no need to explicitly indicate  the index).
By the results recalled in Subsection \ref{2s}, 
the above  order $< _L$ 
has type  
$\prod _{i < \omega  } \alpha _i $.
On the other hand,
in the finite case, by Carruth theorem mentioned in \ref{3s},
$\bigotimes _{i < n  } \alpha _i $ is the order-type
of the largest linear
extension of $\leq_{\times}$ on $\bigtimes _{i < n  } \alpha _i 
= \bigtimes _{i < n  }^0 \alpha _i$.
We show that in the infinite case
 $\xx _{i < \omega }  \alpha _i $
can be evaluated by combining the above constructions.

If   $a \in \bigtimes _{i <  \omega  } \alpha _i $
is a sequence and $n< \omega$, let 
$a _{\restriction  n} $
 be the restriction of $a$ to $n$,
that is,  $a _{\restriction  n} $ 
is the element of $ \bigtimes _{i <  n} \alpha _i$
defined as follows.
If    $ a= ( a_i) _{i < \omega} $, then  
$a _{\restriction  n} = ( a_i) _{i < n}$. 
Here, as usual, we adopt the convention 
$n= \{ 0, 1, \dots, n-1 \} $. 

Let us say that a linear order $<'$ on  
 $\bigtimes^0 _{i < \omega  } \alpha _i $
is \emph{finitely Carruth}
if  $<'$ extends $< _{\times}^0$ and 
there are an $n < \omega$
and an order 
$\leq ''$ 
extending  $< _{\times}$
on  $\bigtimes _{i < n } \alpha _i $ and such that 
if  $a \neq  b \in \bigtimes^0 _{i <  \omega  } \alpha _i $
and $i= \diff(a,b)$,  then
  \begin{enumerate}
\item
if $i \geq n$, then
 $a <' b$ if and only if 
 $ a_i < b_i $, and
    \item 
if $i <  n$, then 
 $a <' b$ if and only if 
$a _{\restriction  n} <''  a _{\restriction  n} $.
  \end{enumerate}

A linear order $<'$ on  
 $\bigtimes^0 _{i < \omega   } \alpha _i $
is \emph{locally finitely Carruth}
if
  $<'$ extends $< _{\times}^0$ and, 
for every $c \in \bigtimes^0 _{i < \omega  } \alpha _i $,
there is  $n= n_c < \omega$
 such that, for every   $a \neq  b \in \bigtimes^0 _{i <  \omega  } \alpha _i $,
if  $a, b <' c$  and  
$i= \diff(a,b) \geq n$, then
 $a <' b$ if and only if 
 $ a_i < b_i $.
In other words, for every $c$, (1) above holds,
restricted to those pairs of elements 
$a, b $ which are $ <' c$,
while no version of (2) is assumed.

\begin{theorem} \labbel{otcth}
If $( \alpha_i) _{i < \omega} $ 
is a sequence of ordinals, then 
 $ \xx _{i < \omega }  \alpha _i $
is the 
order-type
of some
 finitely Carruth linear order on
$\bigtimes _{i < \omega  }^0  \alpha _i $.

Every 
(locally)
 finitely Carruth linear
order on
$\bigtimes _{i < \omega  }^0  \alpha _i $
is a well-order; moreover,
$ \xx _{i < \omega }  \alpha _i $
is the 
largest order-type
of all such orderings.
 \end{theorem}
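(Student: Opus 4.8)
The plan is to prove the three assertions in order: realizability (there exists a finitely Carruth order of the right type), well-foundedness of every locally finitely Carruth order, and the bound (every such order has type $\leq \xx_{i<\omega}\alpha_i$). I would begin by disposing of the trivial cases. If some $\alpha_i=0$, then $\bigtimes^0_{i<\omega}\alpha_i=\emptyset$, so the only order is the empty one, of type $0=\xx_{i<\omega}\alpha_i$. If the sequence is eventually $1$, say $\alpha_i=1$ for $i\geq n$, then $\bigtimes^0_{i<\omega}\alpha_i$ is in bijection with the finite product $\bigtimes_{i<n}\alpha_i$, and the statement reduces to Carruth's finitary theorem quoted in Subsection~\ref{3s}. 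So from now on assume no $\alpha_i$ is $0$ and the sequence is not eventually $1$.

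For realizability I would use Corollary~\ref{segue}: fix $m$ as there, so that for every $n\geq m$ we have $\xx_{i<\omega}\alpha_i=(\alpha_0\x\dots\x\alpha_{n-1})\cdot\prod_{n\leq i<\omega}\alpha_i$. Pick any such $n\geq m$ and let $\leq''$ be an order on $\bigtimes_{i<n}\alpha_i$ realizing Carruth's maximum, i.e.\ of type $\alpha_0\x\dots\x\alpha_{n-1}$. Define $<'$ on $\bigtimes^0_{i<\omega}\alpha_i$ by clauses (1) and (2) in the definition of finitely Carruth: compare by the last difference, using $\leq''$ on the restriction to $n$ if that difference is $<n$, and the plain ordinal order if it is $\geq n$. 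The point is that $\bigtimes^0_{i<\omega}\alpha_i$ then decomposes as an anti-lexicographic product of the ``tail'' $\LLL{n\leq i<\omega}\alpha_i$ (type $\prod_{n\leq i<\omega}\alpha_i$ by Subsection~\ref{2s}) over the ``head'' $(\bigtimes_{i<n}\alpha_i,\leq'')$ (type $\alpha_0\x\dots\x\alpha_{n-1}$), so $<'$ has type $(\alpha_0\x\dots\x\alpha_{n-1})\cdot\prod_{n\leq i<\omega}\alpha_i=\xx_{i<\omega}\alpha_i$. One should check $<'$ extends $\leq^0_\times$, which is routine from the fact that $\leq''$ extends $\leq_\times$ and that the plain order refines it componentwise at the last difference.

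The substantive part is the second paragraph of the theorem: every locally finitely Carruth $<'$ is a well-order of type $\leq\xx_{i<\omega}\alpha_i$ (this also covers finitely Carruth orders, which are a fortiori locally finitely Carruth, so together with realizability it yields ``largest''). I would fix such a $<'$ and argue by contradiction against well-foundedness, or more efficiently prove the bound directly, which gives well-foundedness for free. Fix $c\in\bigtimes^0_{i<\omega}\alpha_i$ and the associated $n_c$; I claim the set $I_c=\{a : a<'c\}$ of $<'$-predecessors of $c$, ordered by $<'$, embeds (order-preservingly) into an anti-lexicographic product whose type is at most $\xx_{i<\omega}\alpha_i$. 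The idea: on $I_c$, comparisons whose last difference is $\geq n_c$ are governed by clause (1), i.e.\ by the plain tail order $\LLL{n_c\leq i<\omega}\alpha_i$; comparisons with last difference $<n_c$ are governed by \emph{some} linear extension of $\leq_\times$ on $\bigtimes_{i<n_c}\alpha_i$ (here one must extract, from the single linear order $<'$ restricted to $I_c$, an honest linear extension $\leq''_c$ of the componentwise order on the finite head — this is where the ``locally'' hypothesis does real work, and is the main obstacle, since a priori $<'$ restricted to $I_c$ need not split as a clean product). Once that extraction is done, $I_c$ sits inside $\LL{\ }$-style product of $(\bigtimes_{i<n_c}\alpha_i,\leq''_c)$ with the tail, of type $\leq(\alpha_0\x\dots\x\alpha_{n_c-1})\cdot\prod_{n_c\leq i<\omega}\alpha_i=\xx_{i<\omega}\alpha_i$ by Carruth's bound and Corollary~\ref{segue} (for $n_c$ possibly smaller than the $m$ of that corollary, use monotonicity and that enlarging $n_c$ only helps). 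Hence every proper initial segment $I_c$ of $(\bigtimes^0_{i<\omega}\alpha_i,<')$ is well-ordered of type $<\xx_{i<\omega}\alpha_i$; since the whole order is the union of its proper initial segments (every element has some $c$ above it, e.g.\ componentwise), it is itself well-ordered, and its type is the supremum of the types of the $I_c$, hence $\leq\xx_{i<\omega}\alpha_i$. Combined with the first paragraph, the maximum is attained, completing the proof.

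\medskip

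I expect the delicate point to be the extraction of a \emph{uniform} linear extension of the finite head order from the restriction of $<'$ to an initial segment: $<'$ only tells us, for a \emph{fixed} pair $a,b$ with $\diff(a,b)<n_c$, how $a$ and $b$ compare, but to invoke Carruth we need a single order $\leq''_c$ on $\bigtimes_{i<n_c}\alpha_i$ extending the componentwise order and inducing exactly those comparisons on the image of $I_c$. One obtains it by noting that if $a_{\restriction n_c}=b_{\restriction n_c}$ then $a,b$ differ only in the tail and are comparable there, so the map $a\mapsto a_{\restriction n_c}$ partitions $I_c$ into ``fibers'' each linearly ordered by the tail; declaring $x\leq''_c y$ for $x,y\in\bigtimes_{i<n_c}\alpha_i$ according to how any lift compares (well-defined because, by clause (1)/(2) with last difference $<n_c$, the comparison of $a,b$ with $a_{\restriction n_c}\neq b_{\restriction n_c}$ depends only on $a_{\restriction n_c},b_{\restriction n_c}$) gives the required extension. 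Verifying this well-definedness and that $\leq''_c$ extends $\leq_\times$ is the technical heart; everything else is bookkeeping with the product-of-orders type computations already available from Sections~\ref{ainp} and~\ref{otc}.
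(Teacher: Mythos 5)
Your outline matches the paper's strategy in its trivial cases, in the realizability half (Corollary~\ref{segue} plus Carruth on a head of length $n\geq m$, anti-lexicographically multiplied by the tail $\LLL{n\leq i<\omega}\alpha_i$), and in the closing step (no maximum in the nontrivial case, so the type is the supremum of the types of the predecessor sets). But there is a genuine gap exactly at the step you yourself call the technical heart. Your extraction of a \emph{uniform} linear extension $\leq''_c$ of $\leq_\times$ on $\bigtimes_{i<n_c}\alpha_i$ is justified by appealing to ``clause (1)/(2) with last difference $<n_c$''; for a \emph{locally} finitely Carruth order there is no clause (2) at all --- the definition explicitly assumes only the tail clause --- and the well-definedness you claim is false in general. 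Indeed, if $a,b<'c$ and $a_{\geq n_c}\neq b_{\geq n_c}$, then $\diff(a,b)\geq n_c$ and the comparison is governed by the tails alone, telling you nothing about the heads; while if $a_{\geq n_c}=b_{\geq n_c}=d$, the comparison of the pair is unconstrained beyond linearity and extending $\leq_\times$. Since all elements with tail $d_1$ lie $<'$-below all elements with tail $d_2$ whenever $d_1<_L d_2$, transitivity imposes no link between different fibers, so two $\leq_\times$-incomparable heads $x\neq y$ can perfectly well satisfy $(x,d_1)<'(y,d_1)$ but $(y,d_2)<'(x,d_2)$. Hence no single $\leq''_c$ inducing all these comparisons need exist, and your embedding of $I_c$ into one anti-lexicographic product breaks down.

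The repair is to drop uniformity, which is what the paper does: partition $I_c$ according to the tail value $d=a_{\geq n_c}$; the set $P$ of occurring tails inherits from $\leq_L$ a well-order of type $\leq\prod_{n_c\leq i<\omega}\alpha_i$, comparisons across distinct tails are forced by the local clause to agree with $\leq_L$, and on each fiber $Q_d$ the order $<'$ induces \emph{some} (possibly $d$-dependent) linear extension of the restriction of $\leq_\times$, hence by Carruth a well-order of type $\leq\bigotimes_{i<n_c}\alpha_i$. Then $I_c$ is isomorphic to the ordered (lexicographic) sum over $P$ of the fibers, so its type is $\leq\bigotimes_{i<n_c}\alpha_i\cdot\prod_{n_c\leq i<\omega}\alpha_i$, and after enlarging $n_c$ to exceed the $m$ of Corollary~\ref{segue} (which is harmless, since the local condition persists under increasing $n_c$) this equals $\xx_{i<\omega}\alpha_i$ by equation~\eqref{2}. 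With that substitution the rest of your argument goes through; note also that your ``type $<\xx_{i<\omega}\alpha_i$'' for the segments should read ``$\leq$'', which is all the supremum argument needs.
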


\begin{proof}
By  Corollary \ref{segue}, in particular, equation \eqref{2},
there is some  $m< \omega$  
such that 
$ \xx _{ i < \omega } \alpha _i
 =
( \alpha _0 \x \dots \x \alpha _{m-1}) \cdot 
  \prod_{m\leq i < \omega } \alpha _i $.
By Carruth theorem,
there is some 
linear
extension $\leq_C$ of $\leq_{\times}$ on $\bigtimes _{i < m } \alpha _i $ 
such that $P_0 = (\bigtimes _{i < m  } \alpha _i ,\leq_C)$ has
order-type $\bigotimes _{i < m  } \alpha _i $.
By  the results recalled in Subsection \ref{2s}, 
$ \prod_{m\leq i < \omega } \alpha _i $
is the order-type of 
$P_1= (\bigtimes _{m\leq i < \omega } ^0  \alpha_i,  
 \leq_L )$.
Then
 $P= \LL {h < 2} \, P_h$
has order-type 
$( \alpha _0 \x \dots \x \alpha _{m-1}) \cdot 
  \prod_{m\leq i < \omega } \alpha _i 
=  \xx _{ i < \omega } \alpha _i
$, since, as we mentioned, 
for two ordinals $ \gamma _0$ and
$ \gamma _1$  
 the order-type of
 $\LL {i < 2} \, \gamma  _i$
is $ \gamma _0 \gamma  _1$.
Through the canonical bijection
between 
$\bigtimes _{ i < \omega } ^0  \alpha_i$ 
and
$\bigtimes _{i < m } \alpha _i 
 \times \bigtimes _{m\leq i < \omega } ^0  \alpha_i$,
the order $<'$ we have constructed on 
$\LL {i < 2} P _i  $ 
is clearly  
finitely Carruth. Indeed, in the 
definition of finitely Carruth,
take  $n=m$  and
take $\leq''$ as  $\leq_C$.
 If 
$ i= \diff(a,b) < m$,
then 
the ordering between $a$ and $b$
is determined by their $ { \restriction } m $ part, 
since sequences with  the same 
$P_1$-component  are ordered
according to  their $P_0$ component.
Hence
(2) holds.   
On the other hand,
if
$i= \diff(a,b) \geq m$, then    $ a_i < b_i $ if and only if 
$a <' b$, 
by the definition of $\leq_L$, 
thus (1) holds.
Obviously, $<'$ extends $<_{\times}$,
since both 
 $\leq_C$ and $\leq_L$   extend $\leq_{\times}$
on their respective components.  
Hence
$ \xx _{i < \omega }  \alpha _i $
can be realized as the order-type
of some
 finitely Carruth order on
$\bigtimes _{i < \omega  }^0  \alpha _i $
and the first statement is proved.

Since every
 finitely Carruth order on 
$\bigtimes _{i < \omega  }^0  \alpha _i $ is obviously
 locally finitely Carruth,
it is enough to prove that  every locally
 finitely Carruth order on 
$\bigtimes _{i < \omega  }^0  \alpha _i $
is a well-order
of type $\leq \xx _{i < \omega }  \alpha _i$.
So let us assume from now on that 
$<'$ is   locally finitely Carruth.

\begin{claim} \labbel{claim}
If 
$c \in \bigtimes _{i < \omega  }^0  \alpha _i $
 and
$C= \{ a \in  \bigtimes _{i < \omega  }^0  \alpha _i  
\mid a <' c \} $, then 
$(C, <' _{ \restriction C} ) $ is a well-ordered set
of type $\leq \xx _{i < \omega }  \alpha _i$.
 \end{claim}   

\begin{proof}
Let  $n=n_c$  be given by   
local Carruth finiteness and,
as above, let 
$P_1= (\bigtimes _{n\leq i < \omega } ^0  \alpha_i,  
 \leq_L )$.
Notice that, by
 Subsection \ref{2s},
$P_1$ is well-ordered and has  type 
$\prod_{n\leq i < \omega } \alpha _i$. 
If $a \in \bigtimes _{i < \omega  }^0  \alpha _i $,
say,
  $ a= (a_i) _{i < \omega} $, recall that
$a _{\restriction  n} $
is 
the element $  ( a _i) _{i < n}$
 of $ \bigtimes _{i <  n} \alpha _i$.
Similarly, let 
$a _{{\geq} n} $
be
the element $  ( a _i) _{i \geq  n}$
 of $ \bigtimes _{i \geq  n}^0 \alpha _i$.
Thus the position
$ a \mapsto (a _{\restriction  n} , a _{{\geq} n} )$
gives the canonical bijection (mentioned but not described above)
from  
$\bigtimes _{ i < \omega } ^0  \alpha_i$ 
to
$\bigtimes _{i < n} \alpha _i 
 \times \bigtimes _{n \leq i < \omega } ^0  \alpha_i$.
If $P= \{ 
d \in  \bigtimes _{n\leq i < \omega } ^0  \alpha_i
  \mid d=  a _{{\geq} n}
\text{, for some   } a \in 
 \bigtimes _{ i < \omega } ^0  \alpha_i 
\text{ such that  } a <' c \} $,
then $P \subseteq P_1$ hence 
$P$ as a suborder of $P_1$ inherits a well-order of type 
$\leq \prod_{n\leq i < \omega } \alpha _i$.
Moreover, by 
local
 Carruth finiteness,
if
$a,b <' c$ and $a _{{\geq} n}  <_L  b _{{\geq} n} $,
then $a <' b$.   
If $d \in P$, let $Q_d = \{ a _{\restriction  n}  
\mid 
a \in \bigtimes _{i < \omega  }^0  \alpha _i,
 a <' c \text{ and }  a _{{\geq} n} = d \} $. 
Then, for every $d \in P$, the order  $<'$ induces an order 
$ <_d $ on  $Q_d $ by letting
$a _{\restriction  n}  <_d b _{\restriction  n} $ 
if and only if $a <' b$
(notice that, since we are assuming $a,b \in Q_d$,
then $a$ and  $b$ have the same $\geq n$ components).
Since, by assumption, $<'$ extends $<_ {\times}$
on 
$\bigtimes _{ i < \omega } ^0  \alpha_i$,
then 
$ <_d $ extends the restriction of    
$<_ {\times}$ on 
 $\bigtimes _{i < n} \alpha _i $
to $Q_d$.
Hence, by Carruth theorem, 
for every $d \in P$ we have that
$(Q_d, <_d)$ is well-ordered and has type
$\leq \bigotimes _{i < n  } \alpha _i $.

The above considerations show that
$(C, <' _{ \restriction C} ) $ is isomorphic to
the lexicographical product 
$ \text{L} _{d \in P} \, Q_d  $
(recall that if
$a,b <' c$ and $a _{{\geq} n}  <_L  b _{{\geq} n} $,
then $a <' b$).
Since, as we showed,  $P$ is  a well-ordered set  of type 
$\leq \prod_{n\leq i < \omega } \alpha _i$
and each $Q_d$  is a well-ordered set of type
$\leq \bigotimes _{i < n  } \alpha _i $,
then
$(C, <' _{ \restriction C} ) $ is a well-ordered set of order-type 
$\leq \bigotimes _{i < n  } \alpha _i  \cdot \prod_{n\leq i < \omega } \alpha _i$.
 If  
the $m$ given by Corollary \ref{segue} is $\leq n$,
then we immediately get from
equation \eqref{2}
that   $(C, <' _{ \restriction C} ) $ has order-type  $\leq \bigotimes _{i <  \omega  } \alpha _i$.
Otherwise, notice that 
if the condition for 
local
 Carruth finiteness
is satisfied for $c$ and for some $n_c$,
then the condition is satisfied for any 
$n' \geq n_c$ in place of $n_c$,
hence it is no loss of generality to suppose that  the   
$m$ given by Corollary \ref{segue} is $\leq n$
and we are done as before.
\renewcommand{\qedsymbol}{$\Box _{Claim}$}
\end{proof}
 To complete the proof of the theorem,
we have from the Claim that, for every  
$c \in \bigtimes _{i < \omega  }^0  \alpha _i $,
the set of the $<'$-predecessors of $c$ 
is well-ordered; this implies that $<'$
is a well-order. 

It remains to show that $<'$
has order-type $ \leq \xx _{i < \omega }  \alpha _i$. 
This is vacuously true 
if some $\alpha_i$ is $0$ and it follows from Carruth theorem
if the $\alpha_i$'s are eventually $1$. 
Otherwise,  
local
 Carruth finiteness
implies that $<'$ has no maximum. 
Indeed, 
if $a \in \bigtimes _{i < \omega  }^0  \alpha _i $,
then there is $k< \omega$ 
such that $a_i =0$, for $i>k$.
Since the sequence of the $\alpha_i$'s is not eventually $1$
and no $\alpha_i$ is $0$, there is 
$ \bar{\imath}>k$ such that $\alpha_{ \bar{\imath}}>1$.  
If $b$ is equal to $a$ on each component, except that
$b_{ \bar{\imath}}=1$, then
$a < _{\times} b$, hence $a < ' b$,
since, by assumption,  $<'$ extends $< _{\times}$.    
Since $a$ above has been chosen arbitrarily, we get that $<'$
has no maximum.
 Then the Claim implies that
$<'$ has order-type  $\leq \xx _{i < \omega }  \alpha _i$.
\end{proof}

\begin{remark} \labbel{transf}
For the sake of simplicity, we have limited our study here
to sequences of length $ \omega$.
However, essentially all the results here admit a reformulation for the case
of ordinal-indexed transfinite sequences of arbitrary length, modulo the case
of the transfinite natural sums studied in \cite{t}.
It should be remarked that there are different ways
to extend the natural sums and products to sequences of length $> \omega $. 
See \cite[Section 5]{t}, in particular, Definitions 5.2 and Problems 5.6.
We just give here the relevant definitions relative to transfinite products.

Suppose that $( \alpha_ \gamma ) _{ \gamma < \bar{\varepsilon}  } $
is a sequence of ordinals. The
\emph{iterated natural product}
$\prodh _{ \gamma < \delta } \alpha _ \gamma $ is defined
for every $\delta \leq \bar{\varepsilon} $
 as follows.
 \begin{gather*}
\prodh _{ \gamma < 0 }  \alpha _ \gamma  =  1 ;  \quad \quad \quad
\prodh _{ \gamma < \delta+1 } \alpha _ \gamma  =
\left(\prodh _{ \gamma < \delta } \alpha _ \gamma \right)
\x
\alpha_ \delta
\\
 \prodh _{ \gamma < \delta } \alpha _ \gamma  =
\lim _{ \delta ' < \delta }\  \prodh _{ \gamma < \delta' } \alpha _ \gamma
\quad \text{ for $\delta$ limit}  
\end{gather*}
Moreover, we set
\begin{equation*}    
\sideset{}{^o}\xx _{ \gamma < \delta  } \alpha _ \gamma  = 
\inf_ \pi \prodh _{ \gamma < \delta  }  \alpha _{ \pi( \gamma )} 
 \end{equation*} 
where $\pi$ varies among all the permutations of $\delta$.
In the above definition we are keeping $\delta$ fixed.
  In the next
 definition, on the contrary, we let the ordinal $\delta$ vary.
Suppose that $I$ is \emph{any} set and
$( \alpha_i) _{i \in I} $ is a sequence of ordinals.
Define
 \begin{equation*}   
\sideset{}{'}\xx _{ i \in I } \alpha _i = 
\inf_{ \delta , f} \prodh _{ \gamma < \delta  }  \alpha _{ f( \gamma )} 
 \end{equation*} 
where $\delta$ varies among all the ordinals
having cardinality $|I|$ and 
$f$ varies among all the bijections from $\delta$ to $I$.
Furthermore, 
let $\lambda=|I|$ and define
 \begin{equation*}  
\sideset{}{^\bullet}\xx_{ i \in I } \alpha _i = 
\inf_{ f} \prodh _{ \gamma < \lambda  }  \alpha _{ f( \gamma )} 
 \end{equation*} 
where $f$ varies among all the bijections from $ \lambda $ to $I$.
  \end{remark}

\begin{acknowledgement} 
We thank an anonymous referee of \cite{w} for many interesting suggestions
 concerning the relationship between
natural sums and the theory of well-quasi-orders. 
We thank Harry Altman for stimulating discussions.
We thank Arnold W. Miller for letting us know
of Toulmin paper \cite{T} in the MR review of 
\cite{w}. 
 \end{acknowledgement}

The literature on the subject 
of ordinal operations is so vast and sparse 
that we cannot claim completeness of the 
following
list of references.

{\scriptsize
It is not intended that each work in the list
has given equally significant contributions to the discipline.
Henceforth the author disagrees with the use of the list
(even in aggregate forms in combination with similar lists)
in order to determine rankings or other indicators of, e.~g., journals, individuals or
institutions. In particular, the author 
 considers that it is highly  inappropriate, 
and strongly discourages, the use 
(even in partial, preliminary or auxiliary forms)
of indicators extracted from the list in decisions about individuals (especially, job opportunities, career progressions etc.), attributions of funds, and selections or evaluations of research projects.
\par }

\end{document}